\newtheorem{theorem}{Theorem}
\newtheorem{lemma}{Lemma}
\theoremstyle{definition}
\newtheorem{definition}{Definition}
\theoremstyle{remark}
\newtheorem{remark}{Remark}
\begin{document}

\title{THE MINIMUM CROSSING NUMBER OF ESSENTIAL TANGLES}

\author{Jo\~{a}o Miguel Nogueira} 

\address{CMUC, Department of Mathematics, University of Coimbra, Apartado 3008, 3001-454 Coimbra, Portugal\\
nogueira@mat.uc.pt, ams@mat.uc.pt}

\author{Ant\'{o}nio Salgueiro}

\thanks{Both authors were partially supported by the \textit{Centro de Matem\'{a}tica da Universidade de Coimbra} (CMUC), funded by the European Regional Development Fund through the program COMPETE and by the Portuguese Government through the FCT - \textit{Funda\c{c}\~{a}o para a Ci\^{e}ncia e Tecnologia} under the project PEst-C/MAT/UI0324/2011.}

\begin{abstract}
In this paper we compute the sharp lower bounds for the crossing number of $n$-string $k$-loop essential tangles. For essential tangles with only string components, we characterise the ones with the minimum crossing number for a given number of components, both when the tangle has knotted strings or only unknotted strings.
\end{abstract}

\keywords{Essential tangles, crossing number, tangle classification.}

\subjclass[2010]{57M25, 57M27}

\maketitle

\section{Introduction}

\indent Tangles were first introduced by Conway in \cite{Conway} where he defines and classifies $2$-string (\textit{rational}) tangles as an instrument to list knots. Since then, the concept of tangle has been important in knot theory and its applications, and $3$-manifold topology. For instance, in \cite{Kir-Lick} Kirby and Lickorish prove that any knot is concordant to a prime knot by introducing the concept of essential tangle. Other example is the work of Lickorish in \cite{Lickorish} and its extension by G\'omez-Larra\~naga in \cite{Larranaga} where conditions for knot or link primeness are given based on tangle decompositions of a knot or link. In $3$-manifold topology we can also find a pertinent use of tangles in the study of Dehn fillings and related problems as the cabling conjecture \cite{Men-Zhang}. We also have the presence of tangles in applied mathematics as in the study of the DNA topology. This application was pioneered by Ernst and Sumners in \cite{Ernst-Sumners}, and in the survey paper \cite{Buck} by Buck we can find a concise explanation on how tangles are used to study the DNA topology.\\     

There have been several other studies on tangles and their diagrams. For instance, in the work of Conway in \cite{Conway}, besides introducing the concept of tangle, the rational tangles with two strings are also classified. Also, in \cite{Kanenobu} Kanenobu, Saito and Satoh classify up to isotopy $2$-string $k$-loop prime tangles with up to seven crossings. In a more general approach, the paper \cite{Bogdanov} presents a study on the enumeration of $k$-string tangle projections, and a computerized enumeration of alternating tangles is given in \cite{Jacobsen}. Also, in \cite{Cochran} Cochran and Ruberman present invariants of $2$-string tangles and use them to distinguish some knots from their mutants.\\

In this paper we compute the sharp lower bound for the crossing number of all $n$-string $k$-loop essential tangles, for each $n\geq 1$ and $k\geq 0$. In the case when the tangles have no loops, only strings, we also characterize which $n$-string essential tangles have the minimum crossing number possible. This question was initially motivated by an observation of Buck in \cite{Buck} that $2$-string essential tangles have crossing number at least five and citing the work of Bleiler in \cite{Bleiler} for this statement. The aim of this paper is to present a sharp lower bound on the crossing number of $n$-string $k$-loop essential tangles. The results obtained are presented in the following theorems.

\begin{theorem}\label{tangle general}
Let $\mathcal{T}$ be a $n$-string essential tangle. Then $$c(\mathcal{T})\geq 2n+1.$$

We have $c(\mathcal{T})=2n+1$ if and only if $\mathcal{T}$ is equivalent to the tangle in Fig. \ref{figure:tangle general}.

\end{theorem}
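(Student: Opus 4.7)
The plan is to proceed by induction on the number of strings $n$, combining the inductive hypothesis with a careful diagrammatic analysis of minimal crossing projections. For the base case $n=1$, an essential $1$-string tangle must be a knotted arc in the ball, since a boundary-parallel arc yields a compressible boundary annulus. A Reidemeister analysis shows that any arc diagram with at most two crossings can be simplified to the trivial arc, so a knotted arc requires at least three crossings, and this is realised uniquely (up to equivalence) by the trefoil arc.

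For the inductive step, let $\mathcal{T}$ be an $n$-string essential tangle with a minimal diagram $D$, and suppose for contradiction that $c(D)\leq 2n$. I would first establish a combinatorial lemma: in a minimal diagram of an essential tangle with $n\geq 2$ strings, every string has at least two crossings on it. The argument uses essentiality to rule out the degenerate configurations -- a string with zero or one crossing can be traced to a trivialising disk that either violates minimality of $D$ (a Reidemeister~I kink) or exhibits a compressing disk for the $2n$-punctured boundary sphere (a boundary-parallel arc whose trivialising disk is disjoint from the rest of $\mathcal{T}$). From the identity $\sum_{i} c_i = 2c(D)\leq 4n$ I would then select a string $s$ with few crossings (ideally exactly two), and isotope it out of the ball through a neighbourhood of a boundary arc. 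This produces an $(n-1)$-string tangle $\mathcal{T}'$ with $c(\mathcal{T}')\leq c(D)-2\leq 2n-2<2(n-1)+1$, giving a contradiction with the inductive hypothesis -- provided that $\mathcal{T}'$ is still essential.

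The main obstacle will be this preservation of essentiality under removal: a compressing disk for $\mathcal{T}'$ might be blocked in $\mathcal{T}$ by the removed string $s$. I would address this via a standard surgery argument on the intersection of such a hypothetical compressing disk with a regular neighbourhood of $s$; the fact that $s$ carries only two crossings, and a case analysis on how these two crossings sit relative to the other strings, should suffice to complete the surgery and turn a compressing disk for $\mathcal{T}'$ into one for $\mathcal{T}$, contradicting essentiality of $\mathcal{T}$. For the characterisation of equality, I would reverse the inductive step: if $c(\mathcal{T})=2n+1$, the same removal produces an essential $(n-1)$-string tangle $\mathcal{T}'$ with exactly $2(n-1)+1$ crossings, hence by induction equivalent to the $(n-1)$-string version of the tangle in Fig.~\ref{figure:tangle general}. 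Classifying the ways in which a two-crossing string may be reinserted into $\mathcal{T}'$ subject to essentiality and the minimal crossing count should yield a single possibility up to equivalence, namely the $n$-string version of the figure.
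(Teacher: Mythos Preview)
Your inductive strategy is quite different from the paper's, which does not use induction at all. The paper argues directly in a minimal diagram: to each of the $2n$ string endpoints one associates its first (``outermost'') crossing, and minimality forces this association to be injective, yielding $2n$ distinct outermost crossings; a short argument with the external/internal/lateral trichotomy of segments then shows there must also be at least one \emph{inner} crossing, since otherwise some segment would be simultaneously internal and lateral, again contradicting minimality. This gives $c(\mathcal{T})\ge 2n+1$ in one stroke, and the equality case is handled by analysing the unique inner crossing directly.

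Your approach, by contrast, hinges on the claim that removing a low-crossing string $s$ from an essential tangle yields an essential tangle, and this is where the argument has a genuine gap. The ``standard surgery argument'' you invoke does not exist in this setting: if $D'$ is a separating disk for $\mathcal{T}\setminus s$, then $s\cap D'$ is a finite set of \emph{points}, so innermost-disk surgery does not apply; passing to a regular neighbourhood of $s$ turns these into meridional circles on a tube, which bound no disks in the tube boundary and whose obvious caps meet $s$. Attempting instead to surger $D'$ along a trivialising disk $E$ for $s$ fails because the outermost subdisks of $E$ will typically meet the other strings (indeed, $s$ crosses them). Removing a string from an essential tangle can certainly destroy essentiality---for instance, removing either string from the $n=2$ tangles of Fig.~\ref{figure:tangle unknotted strings} leaves a single trivial arc---so the phenomenon you must exclude is real and not handled by routine surgery. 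The same gap recurs in your characterisation step, where you again need $\mathcal{T}\setminus s$ to be essential before invoking the inductive hypothesis; you also have not shown that a two-crossing string must exist when $c(\mathcal{T})=2n+1$. The paper's direct count sidesteps all of this.
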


\begin{figure}[ht]

\begin{center}

\psset{unit=6mm,algebraic=true}

\begin{pspicture}(-3,-3)(3,3)

\pscircle[linewidth=0.1pt](0,0){3}

\pscurve(-1.2,2.74955)(-1.2,1)(-1.5,0.2) \pscurve(-1.7,-0.1)(-1.8,-0.5)(-1.2,-1)(-0.8,-1.15) \pscurve(-0.4,-1.2)(0.1,-1.2)(0.5,-1.15)

\pscurve(0.75,-1.1)(0.9,-1.05)(1.1,-0.95) \pscurve(1.35,-0.8)(1.7,0)(1.35,0.8)(1.1,0.95)(0.9,1.05)(0.75,1.1)(0.5,1.15)(0.1,1.2)(-0.6,1.15)(-1.0,1.05) \pscurve(-1.4,0.9)(-1.8,0.5)(-1.3,-0.5)(-1.2,-0.8) \psline(-1.2,-1.2)(-1.2,-2.74955)

\psline(1.2,2.74955)(1.2,1.1) \psline(1.2,0.5)(1.2,-2.74955)

\psline(0.6,2.93939)(0.6,1.3) \psline(0.6,0.8)(0.6,-2.93939)

\psline(-0.6,2.93939)(-0.6,1.3) \psline(-0.6,0.8)(-0.6,-2.93939)

\rput(0,2){$\cdots$}

\rput(0,0){$\cdots$}

\rput(0,-2){$\cdots$}

\end{pspicture}

\end{center}

\caption{The $n$-string essential tangle with the minimum crossing number.}

\label{figure:tangle general}

\end{figure}
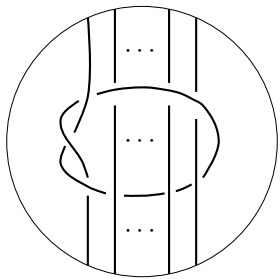


\begin{theorem}\label{tangle unknotted strings}
Let $\mathcal{T}$ be a $n$-string essential tangle, with all strings unknotted. Then $$c(\mathcal{T})\geq 2n+2.$$

We have $c(\mathcal{T})=2n+2$ if and only if $T$ is equivalent to one of the tangles in Fig. \ref{figure:tangle unknotted strings}.

\end{theorem}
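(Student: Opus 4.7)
The plan is to bootstrap Theorem \ref{tangle general}. Any essential $n$-string tangle already satisfies $c(\mathcal{T})\ge 2n+1$, so to prove the strict inequality $c(\mathcal{T})\ge 2n+2$ under the unknottedness hypothesis it suffices to rule out the case $c(\mathcal{T})=2n+1$. That case, by Theorem \ref{tangle general}, forces $\mathcal{T}$ to be the specific tangle pictured in Fig. \ref{figure:tangle general}; a direct inspection of that diagram shows that the innermost ``weaving'' string is knotted (it traces out a trefoil arc, as can be read off from the three self-crossings in the central region of the picture). Hence under the assumption that every string of $\mathcal{T}$ is unknotted, $\mathcal{T}$ cannot be the tangle of Fig. \ref{figure:tangle general}, and the lower bound immediately jumps to $2n+2$.

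For the equality characterization I would pick a minimal diagram $D$ of $\mathcal{T}$ realizing $c(\mathcal{T})=2n+2$ and re-run the combinatorial argument that underlies the proof of Theorem \ref{tangle general}. The natural first step is to decompose the crossings of $D$ into self-crossings of individual strings and crossings between distinct strings. The unknottedness of each string, combined with minimality of $D$, restricts any one string to at most one self-crossing (a trefoil-like sub-pattern would cost at least three), while essentiality forces the cross-string crossings to be arranged so that no boundary-parallel disk can be extracted. Following the same disk-intersection or innermost-arc argument used for Theorem \ref{tangle general}, this should narrow the possibilities for $D$ down to a short enumerable list.

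The main obstacle is the case analysis pinning down the precise tangles in Fig. \ref{figure:tangle unknotted strings}. Compared to Theorem \ref{tangle general}, one is spending a single extra crossing to replace the trefoil string by an unknotted substitute, and there are a few topologically distinct ways to do this while preserving essentiality; for instance, replacing the trefoil by an unknotted arc carrying a single non-nugatory self-crossing threaded by the remaining strings, or redistributing the extra crossing across two adjacent strings so as to create a new essential meridional arc. The delicate work is to verify, on the one hand, that each candidate pictured in Fig. \ref{figure:tangle unknotted strings} is genuinely essential (i.e., admits no boundary-compressing disk), and on the other, that no other minimal configurations survive the enumeration. I expect this second half to absorb the bulk of the argument, mirroring the equality-characterization step already carried out for Theorem \ref{tangle general} but with an additional layer of bookkeeping forced by the extra crossing.
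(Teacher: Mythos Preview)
Your derivation of the inequality $c(\mathcal{T})\ge 2n+2$ is exactly the paper's: invoke Theorem~\ref{tangle general}, note that the unique tangle realizing $2n+1$ has a trefoil string, and conclude.

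For the characterization of equality, however, your proposed organization does not match the paper's and contains a real gap. You suggest splitting crossings into self-crossings versus cross-string crossings and claim that unknottedness plus minimality forces each string to have at most one self-crossing. That claim is not a direct consequence of minimality: an unknotted arc can carry several self-crossings in a minimal tangle diagram once other components thread through the resulting loops (indeed, in the paper's case analysis one of the two distinguished strings carries a self-crossing together with further crossings that cannot be removed, and the paper has to argue separately about which consecutive crossings are alternating). So the ``at most one self-crossing'' step would need its own proof, and it is not clear it is even true in the generality you state it.

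The paper's route avoids this by using the \emph{outermost/inner} dichotomy already set up in Lemma~\ref{injection} and Remark~\ref{no closed strings}: a minimal diagram with $2n+2$ crossings has exactly $2n$ outermost crossings and exactly two inner crossings $c_a,c_b$. The enumeration is then driven by two structural lemmas: first, exactly two strings meet the inner crossings (Lemma~\ref{two strings}, proved by counting internal versus lateral segments at $c_a,c_b$ and using Lemma~\ref{north west}); second, if a single string occupies one inner crossing then it occupies both (Lemma~\ref{both inner}). This reduces the problem to two cases --- one of the two distinguished strings lies on one inner crossing, or both lie on both --- and each case produces a single projection shadow, from which the alternating/non-alternating analysis yields the tangles of Fig.~\ref{figure:tangle unknotted strings}. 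Your sketch does not identify these lemmas or the internal/lateral segment bookkeeping that makes them work; that is the missing idea you would need to complete the classification.
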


\begin{figure}[ht]

\begin{center}

\psset{unit=6mm,algebraic=true}

$\begin{array}{ccc}

\begin{pspicture}(-2.2,-4)(4,4)

\pscircle[linewidth=0.1pt](0,0){3}

\psline(-1.8,2.4)(-1.8,1.7) \psline(-1.8,1.3)(-1.8,-0.6) \psline (-1.8,-1.1)(-1.8,-2.4)
\pscurve(-1.4,-1)(-1.8,-0.8)(-2.35,0)(-2.4,0.2)
\pscurve(-2.4,0.6)(-1.8,1.5)(-1.3,1.4)(-1.2,1.2)
\pscurve(-0.7,-1.17)(-0.9,-1.15)(-1.1,-1.1) \pscurve(-0.4,-1.2)(0.1,-1.2)(0.5,-1.15)

\pscurve(0.75,-1.1)(0.9,-1.05)(1.1,-0.95) \pscurve(1.35,-0.8)(1.7,0)(1.35,0.8)(1.1,0.95)(0.9,1.05)(0.75,1.1)(0.5,1.15)(0.1,1.2)(-0.6,1.15)(-1.6,0.9)
\psline (-2,0.7)(-3,0) 
\psline(-1.2,0.8)(-1.2,-2.74955)

\psline(1.2,2.74955)(1.2,1.1) \psline(1.2,0.5)(1.2,-2.74955)

\psline(0.6,2.93939)(0.6,1.3) \psline(0.6,0.8)(0.6,-2.93939)

\psline(-0.6,2.93939)(-0.6,1.3) \psline(-0.6,0.6)(-0.6,-2.93939)

\rput(0,2){$\cdots$}

\rput(0,0){$\cdots$}

\rput(0,-2){$\cdots$}

\rput(3.5;270){(a)}
\end{pspicture}&

\begin{pspicture}(-4,-4)(4,3)

\pscircle[linewidth=0.1pt](0,0){3}

\psline(1.2,-2.74955)(1.2,-0.8)
\pscurve(1.2,-0.8)(1.25,-0.5)(1.4,-0.2)
\pscurve(1.6,0.2)(1.8,0.8)(1.2,1.2)(0,1.4)(-1.2,1.2)(-1.8,0.8)(-1.6,0.2)
\pscurve(-1.4,-0.2)(-1.25,-0.5)(-1.2,-0.8)
\psline(-1.2,-0.8)(-1.2,-2.74955)

\psline(0.6,-2.93939)(0.6,1.2)
\psline(0.6,1.8)(0.6,2.93939)
\psline(-0.6,-2.93939)(-0.6,1.2)
\psline(-0.6,1.8)(-0.6,2.93939)
\rput(0,2){$\cdots$}
\rput(0,0){$\cdots$}
\rput(0,-2){$\cdots$}

\psline(1.2,2.74955)(1.2,1.5)
\pscurve(1.2,1)(1.3,0.4)(1.8,-0.8)(1.3,-1.1)
\psline(-1.2,2.74955)(-1.2,1.5)
\pscurve(-1.2,1)(-1.3,0.4)(-1.8,-0.8)(-1.3,-1.1)
\pscurve(1.05,-1.15)(0.9,-1.2)(0.75,-1.2)
\pscurve(0.45,-1.25)(0,-1.3)(-0.45,-1.25)
\pscurve(-1.05,-1.15)(-0.9,-1.2)(-0.75,-1.2)

\rput(3.5;270){(b)}
\end{pspicture}&

\begin{pspicture}(-4,-4)(4,3)

\pscircle[linewidth=0.1pt](0,0){3}

\psline(1.2,-2.74955)(1.2,-1.5)
\pscurve(1.2,-1)(1.3,-0.4)(1.8,0.8)(1.3,1.2)
\pscurve(1,1.3)(0,1.4)(-1.2,1.2)(-1.8,0.8)(-1.6,0.2)
\pscurve(-1.4,-0.2)(-1.25,-0.5)(-1.2,-0.8)
\psline(-1.2,-0.8)(-1.2,-2.74955)

\psline(0.6,-2.93939)(0.6,1.2)
\psline(0.6,1.8)(0.6,2.93939)
\psline(-0.6,-2.93939)(-0.6,1.2)
\psline(-0.6,1.8)(-0.6,2.93939)
\rput(0,2){$\cdots$}
\rput(0,0){$\cdots$}
\rput(0,-2){$\cdots$}

\psline(1.2,2.74955)(1.2,1)
\pscurve(1.2,1)(1.3,0.4)(1.4,0.2)
\pscurve(1.6,-0.2)(1.8,-0.8)(1.3,-1.1)(0.75,-1.2)
\psline(-1.2,2.74955)(-1.2,1.5)
\pscurve(-1.2,1)(-1.3,0.4)(-1.8,-0.8)(-1.3,-1.1)
\pscurve(0.45,-1.25)(0,-1.3)(-0.45,-1.25)
\pscurve(-1.05,-1.15)(-0.9,-1.2)(-0.75,-1.2)

\rput(3.5;270){(c)}
\end{pspicture}
\end{array}$

\end{center}

\caption{The all strings unknotted $n$-string essential tangles with the minimum crossing number.}

\label{figure:tangle unknotted strings}

\end{figure}
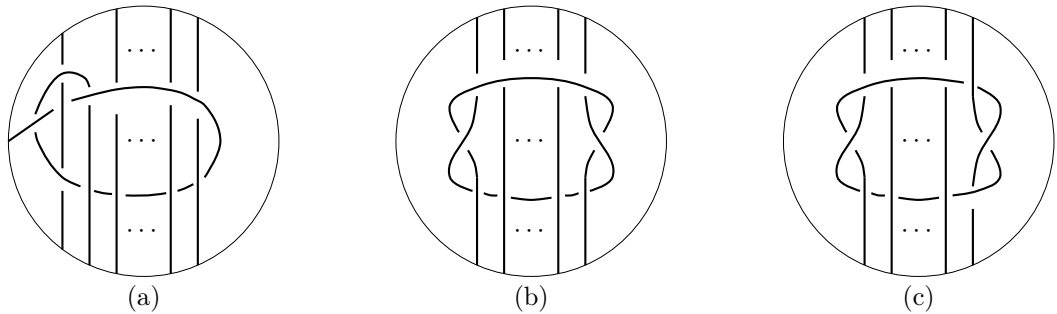


\begin{theorem}\label{tangle closed strings}
Let $\mathcal{T}$ be a $n$-string $k$-loop essential tangle, with $k\geq 1$. Then $$c(\mathcal{T})\geq 2(n+k)-2.$$

If $k=1$, then $c(\mathcal{T})=2n$ if and only if $\mathcal{T}$ is equivalent to the tangle in Fig. \ref{figure:tangle closed strings}.

\end{theorem}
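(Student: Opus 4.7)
My plan is to establish the lower bound $c(\mathcal{T})\geq 2(n+k)-2$ by induction on the number of loops $k\geq 1$, using Theorem~\ref{tangle general} as the base-level input and a loop-removal argument for the inductive step, and then to characterize the extremal tangles in the case $k=1$ by tracking exactly where the induction is tight. Throughout I fix a minimum-crossing diagram of $\mathcal{T}$ in a tangle ball $B$ and a meridional essential planar surface $F \subset B \setminus \mathcal{T}$ witnessing essentiality.

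For the inductive step with $k \geq 2$, the aim is to locate a single loop $L_0$ that contributes at least two crossings to the diagram and whose removal produces an $n$-string $(k-1)$-loop tangle $\mathcal{T}'$ which is again essential. No loop can be embedded as a standalone unknotted circle disjoint from everything else, since this would furnish a compressing sphere and contradict essentiality; combined with minimality of the diagram this forces every loop to have at least two crossings with the rest of the tangle (or with itself). The delicate point is showing that $\mathcal{T}'$ remains essential, which I plan to verify by surgering $F$ against a disk bounded by the removed loop using standard innermost-disk and outermost-arc arguments. The inductive hypothesis applied to $\mathcal{T}'$, together with the crossings attributable to $L_0$, then yields $c(\mathcal{T}) \geq 2(n+k)-2$.

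For the base case $k=1$, I would smooth a crossing between the loop $L$ and a neighbouring strand so that $L$ is absorbed into a string, producing an $n$-string tangle $\mathcal{T}'$ with $c(\mathcal{T}') \leq c(\mathcal{T}) - 1$. When $\mathcal{T}'$ is essential, Theorem~\ref{tangle general} gives $c(\mathcal{T}') \geq 2n+1$, so $c(\mathcal{T}) \geq 2n$, with equality forcing $\mathcal{T}'$ to be the extremal tangle of Figure~\ref{figure:tangle general} and then forcing $\mathcal{T}$ itself to coincide with Figure~\ref{figure:tangle closed strings} after undoing the smoothing. When $\mathcal{T}'$ fails to be essential, the failure can be localised to the smoothed crossing, and a short direct argument recovers the bound and the same extremal shape.

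The main obstacle is preservation of essentiality under the loop-deletion and crossing-smoothing moves, because essentiality is a property of the complement rather than of the diagram. I expect most of the technical work to consist of surface surgery: starting with $F$, one must argue that after removing $L_0$ or smoothing a crossing an essential meridional planar surface still exists in the new complement, possibly after a controlled sequence of compressions and boundary-compressions. Keeping careful track of how the boundary pattern of $F$ on $\partial B$ evolves under these moves, and matching the resulting tangle against either the inductive hypothesis or the extremal configuration of Theorem~\ref{tangle general}, is where I anticipate most of the difficulty.
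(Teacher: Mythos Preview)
Your proposal rests on a misreading of the paper's definition of ``essential''. An essential tangle is one for which \emph{no} properly embedded disk in $B$ separates the components of $\sigma$; there is no ``meridional essential planar surface $F$ witnessing essentiality'' to surger. Consequently the entire mechanism you plan to use in the inductive step---surgering $F$ against a disk bounded by the deleted loop---does not exist. What you would actually need to show is that after deleting some loop $L_0$ no separating disk appears in $B\setminus(\sigma\setminus L_0)$, and this is far from routine: a loop can be the only thing tying two otherwise separable pieces together (think of a chain $s$--$L_1$--$L_2$--$\cdots$, where deleting an interior $L_i$ disconnects the link). You would have to prove that a suitable ``end'' loop always exists, and nothing in your outline addresses this.

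Your base case is also confused. Smoothing a crossing between the loop and a string produces an $n$-string tangle $\mathcal{T}'$ with $c(\mathcal{T}')\le c(\mathcal{T})-1$; if $\mathcal{T}'$ is essential, Theorem~\ref{tangle general} gives $c(\mathcal{T}')\ge 2n+1$, hence $c(\mathcal{T})\ge 2n+2$, not $2n$. So equality $c(\mathcal{T})=2n$ is \emph{impossible} in this branch, and your claim that equality ``forces $\mathcal{T}'$ to be the extremal tangle of Figure~\ref{figure:tangle general}'' is vacuous. The entire characterization of the $k=1$ extremal case would then have to come from the branch where $\mathcal{T}'$ is inessential, and you give no argument there. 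The paper avoids all of these essentiality-preservation problems by working diagrammatically: it partitions the components of $\sigma$ into \emph{blocks} (subtangles closed under the ``loop crosses'' relation), proves $c\ge 2(n_i+k_i-1)$ for each minimal block by successively adding loops, and then shows via an outermost-region argument that in a minimal diagram the partition collapses to a single block.
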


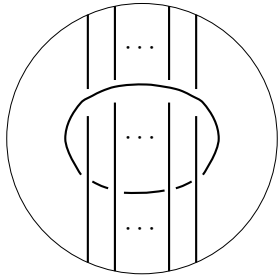
\begin{figure}[ht]

\begin{center}
\psset{unit=6mm,algebraic=true}
\begin{pspicture}(-3,-3)(3,3)
\pscircle[linewidth=0.1pt](0,0){3}
\pscurve(-0.4,-1.2)(0.1,-1.2)(0.5,-1.15) \pscurve(0.75,-1.1)(0.9,-1.05)(1.1,-0.95)
\pscurve(1.35,-0.8)(1.7,0)(1.35,0.8)(1.1,0.95)(0.9,1.05)(0.75,1.1)(0.5,1.15)(0.1,1.2)(-0.6,1.15)(-0.9,1.05)(-1.1,0.95)(-1.35,0.8)(-1.7,0)(-1.35,-0.8)
\pscurve(-1.1,-0.95)(-0.9,-1.05)(-0.75,-1.1)
\psline(1.2,2.74955)(1.2,1.1) \psline(1.2,0.5)(1.2,-2.74955)
\psline(0.6,2.93939)(0.6,1.3) \psline(0.6,0.8)(0.6,-2.93939)
\psline(-0.6,2.93939)(-0.6,1.3) \psline(-0.6,0.8)(-0.6,-2.93939)
\psline(-1.2,2.74955)(-1.2,1.1) \psline(-1.2,0.5)(-1.2,-2.74955)
\rput(0,2){$\cdots$}
\rput(0,0){$\cdots$}
\rput(0,-2){$\cdots$}

\end{pspicture}
\end{center}
\caption{The $n$-string $1$-loop essential tangle with the minimum crossing number.}
\label{figure:tangle closed strings}
\end{figure}

For a brief introduction to the concept of tangle, as in Conway's work, we refer to the book \cite{Adams} by Colin Adams, and throughout this paper we work in the piecewise linear category. In Section $2$ we introduce notation and prove some lemmas that define a base for the proofs of the main results in this paper. In sections $3$, $4$ and $5$, we prove Theorems \ref{tangle general}, \ref{tangle unknotted strings} and \ref{tangle closed strings}, respectively, using combinatorics in the diagram projection of tangles.

\section{Preliminaries}\label{Preliminares} 
A {\em $n$-string $k$-loop tangle} $\mathcal{T}=(B,\sigma)$ is a $3$-ball $B$ together with a compact $1$-sub\-ma\-ni\-fold $\sigma$, with boundary, that is a disjoint collection of $n\geq 1$ arcs (called {\em strings}) and $k\geq 0$ simple closed curves (called {\em loops}). When $k=0$ we say simply that $\mathcal{T}$ is a {\em$n$-string tangle}. In this paper we say that two tangles $\mathcal{T}_1$, $\mathcal{T}_2$ are {\em equivalent} if there is a homeomorphism of $(B,\partial B)$ sending $\mathcal{T}_1$ to $\mathcal{T}_2$.  A tangle $\mathcal{T}$ is said to be {\em essential} if $\sigma$ is a single knotted arc\footnote{An arc of $\sigma$ is {\em unknotted} in $B$ if it cobounds a disk embedded in $B$ together with an arc in $\partial B$, otherwise it is said to be {\em knotted}.}\; in $B$, or if $\sigma$ has more than one component and there is no properly embedded disk in $B$ separating the components of $\sigma$. Otherwise, we say that the tangle is {\em inessential}. All tangles discussed henceforth  (unless specified otherwise) are assumed to be essential tangles.

The {\em crossing number} of a tangle $\mathcal{T}$, denoted by $c(\mathcal{T})$, is the minimum crossing number of the diagrams of $\mathcal{T}$. The classification of the essential $n$-string $k$-loop tangles with the minimum crossing number up to isotopy follows from the classification up to homeomorphism described in this paper, by considering the mirror images of the tangles depicted in Figures 1, 2 and 3.\\

A {\em projection} of a tangle $\mathcal{T}$ is the image $p(\mathcal{T})$ of the tangle by an orthogonal projection $p$ to a plane such that $p(B)$ is a disk, the preimage of each point of $p(\sigma)$ has at most two points, and there is a finite number of double points, which are called the {\em crossings} of the projection. A projection always exists in the piecewise linear category. The connected components of the complement of the crossings are called the {\em segments} of the projection. If the crossings are decorated with broken lines to show the overcrosses and undercrosses, then we get a {\em diagram} of $\mathcal{T}$. The {\em crossing number} of a projection or a diagram of $\mathcal{T}$ is its number of crossings. A {\em minimal diagram} of $\mathcal{T}$ is a diagram with the minimum crossing number $c(\mathcal{T})$.

\begin{definition}
There are four segments adjacent to each crossing. Two of these segments are {\em opposite} if their preimages by the projection are adjacent, and {\em consecutive} otherwise. A crossing is called {\em outermost} if it is the first crossing of a string for some orientation, and {\em inner} otherwise.
\end{definition}

	
\begin{lemma}
In a diagram of an essential tangle $\mathcal{T}$, each string has at least two crossings with the other strings.
\end{lemma}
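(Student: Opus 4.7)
The plan is to argue by contradiction. Assume that in the given diagram of $\mathcal{T}$ some string $s$ has at most one crossing with $\sigma\setminus s$. I will construct a properly embedded disk in $B$, disjoint from $\sigma$, that separates $s$ from $\sigma\setminus s$; this contradicts the essentiality of $\mathcal{T}$.

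Identify $B$ with the product $D\times[-1,1]$ where $D=p(B)$, so that the projection $p$ is the first coordinate and each point of $\sigma$ carries a height $z\in[-1,1]$. The diagram pins down the heights only through the over/under inequalities at the crossings, and we are free to modify the remaining heights by ambient isotopy of $(B,\sigma)$. Suppose first that $s$ has no crossings with $\sigma\setminus s$. Then $p(s)$ and $p(\sigma\setminus s)$ are disjoint compact subsets of $D$, so the heights along $s$ and along $\sigma\setminus s$ can be assigned independently. Ambient-isotope $\sigma$ so that $s\subset D\times[\varepsilon,1]$ and $\sigma\setminus s\subset D\times[-1,-\varepsilon]$; then $D\times\{0\}$ is a properly embedded disk, disjoint from $\sigma$, separating $s$ from $\sigma\setminus s$. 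If instead $s$ has exactly one crossing $c$ with $\sigma\setminus s$, say with $s$ over the other strand at $c$, then the same redistribution is available outside a small neighbourhood of $c$, while at $c$ the over/under datum is already consistent with $s$ in the upper half and the crossing strand in the lower half. Again $D\times\{0\}$ is the desired separating disk.

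The main technical point to verify is that the height redistribution can be realised by a single ambient isotopy respecting the entire diagram: self-crossings of $s$ must be realised in the upper half with the correct over/under, self-crossings among $\sigma\setminus s$ analogously in the lower half, and in the one-crossing case the unique mutual crossing must be compatible with the upper/lower partition. Each of these conditions is met because height assignments on disjoint projection regions are independent, and in the one-crossing case the over/under data prescribed at the single mutual crossing already agrees with the global partition.
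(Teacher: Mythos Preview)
Your argument is correct. Both proofs proceed by contradiction and aim to produce a properly embedded disk separating $s$ from $\sigma\setminus s$, but the constructions differ.

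The paper asserts the existence of a disk $D$ with $\partial D=s\cup\alpha$, $\alpha\subset\partial B$, and then takes a small neighbourhood of $D$ as a ball containing only $s$. This is terse: read literally, no such embedded $D$ exists when $s$ happens to be a knotted arc, so the paper's sentence should be understood as shorthand for the separating-ball conclusion rather than a precise intermediate claim. Your route avoids this issue entirely. By redistributing heights in the product model $D\times[-1,1]$ you manufacture the horizontal slice $D\times\{0\}$ as the separating disk directly, using only that the projections of $s$ and $\sigma\setminus s$ meet in at most one point and that the single over/under datum there is compatible with the global upper/lower partition. The affine rescaling of the original height function into $(0,1]$ (respectively $[-1,0)$) indeed preserves all self-crossing data, so the verification paragraph is sound.

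Two small points worth making explicit: the case where $s$ is \emph{under} at the unique mutual crossing is handled by the symmetric redistribution (push $s$ down and the rest up), and the isotopy moves the endpoints of the strings along $\partial B$, which is permitted since essentiality is invariant under homeomorphisms of $(B,\partial B)$. With those remarks added, your proof is complete and somewhat more robust than the paper's one-line version.
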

\begin{proof}
Suppose that a string $s$ has at most one crossing with the other strings. Then there is a disk $D$ with boundary $\partial D=s\cup \alpha$, where $\alpha$ is a curve in $\partial B$. A small neighborhood of $D$ is then a ball that contains only the string $s$, which shows that $\mathcal{T}$ is not essential.
\end{proof}

The next lemma is a base for the theorems in this paper and provides a common weaker version of all of them.
\begin{lemma}
\label{injection} If $\mathcal{T}$ is a $n$-string $k$-loop essential tangle, then $c(\mathcal{T})\geq 2n$.

\end{lemma}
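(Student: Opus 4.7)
The approach I would take is to establish $c(\mathcal{T}) \geq 2n$ by an injectivity argument on the $2n$ \emph{boundary segments} of strings in a minimal diagram. Fix a minimal diagram $D$ of $\mathcal{T}$; for each of the $2n$ endpoints of strings on $\partial B$, the boundary segment at that endpoint is the portion of the string running from the endpoint to the first crossing encountered. By the previous lemma, each string has at least two crossings with other strings, so each string has at least two crossings on it in $D$; consequently, the two boundary segments attached to a single string terminate at distinct crossings. This gives a well-defined map from the $2n$ boundary segments into the set of crossings of $D$.

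The heart of the proof is to show that this map is injective, which immediately yields $c(\mathcal{T}) \geq 2n$. Suppose, for contradiction, that two boundary segments $\alpha$ and $\beta$ both terminate at the same crossing $C$. If $\alpha$ and $\beta$ were on the same strand of $C$, then the corresponding string would have only the single crossing $C$ on it, contradicting the previous lemma. Hence $\alpha$ and $\beta$ lie on different strands of $C$ and correspond to distinct strings $s_1, s_2$. Together with an arc of $\partial B$ joining the two boundary endpoints, $\alpha \cup \beta$ bounds a disk-region $R$ in the projection.

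I would then choose the pair $(\alpha,\beta)$ so that $R$ is \emph{innermost} (no such configuration is strictly contained in $R$). Because the boundary-segment portions of $\partial R$ have no crossings on them, any component of $\sigma$ entering $R$ must either have both endpoints on the $\partial B$-arc of $\partial R$ or be a loop lying entirely in $R$; essentiality of $\mathcal{T}$ then forces $R$ to be empty of $\sigma$, since otherwise a disk could be constructed in $B$ separating these components from the rest of $\sigma$. With $R$ empty, a local untwisting at $C$ — achieved by a homeomorphism of $(B,\partial B)$ which rotates the arc of $\partial B$ in $\partial R$ and removes $C$ — produces a diagram of $\mathcal{T}$ with strictly fewer crossings, contradicting the minimality of $D$.

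The main obstacle is the innermost-region analysis together with the rigorous execution of the final untwisting step: I would need to verify that the candidate homeomorphism of $(B,\partial B)$ simultaneously removes $C$ and does not create new crossings among the other strands, and that the essentiality hypothesis really does force $R$ to be empty (rather than merely trivially tangled). The case analysis will depend on the over/under assignment at $C$ and the cyclic position of the two boundary endpoints among all $2n$ endpoints on $\partial B$.
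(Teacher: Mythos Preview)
Your approach is essentially the paper's: associate each of the $2n$ string ends to its outermost crossing and show this is injective by arguing that a shared outermost crossing can be removed. The paper simply asserts that one can ``isotope the two ends through the region between them,'' whereas you correctly flag that this region $R$ must first be shown to contain no other components; your use of essentiality here is exactly right and fills a gap the paper leaves implicit. Indeed, since nothing crosses $\alpha$ or $\beta$, any component meeting $R$ lies entirely in $R$, and a properly embedded disk parallel to $\alpha\cup\beta$ separates such components from the rest --- so essentiality forces $R=\emptyset$.

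Two small points. First, your innermost argument is unnecessary: the essentiality argument already shows that \emph{any} such $R$ is empty, with no recursion needed. Second, your inference ``different strands of $C$ $\Rightarrow$ distinct strings $s_1,s_2$'' is false: $\alpha$ and $\beta$ may be consecutive at $C$ yet belong to the same string (with $C$ a self-crossing). This does not affect the argument, since the region $R$ and the uncrossing move are defined purely from the consecutive pair $\alpha,\beta$ regardless of whether they lie on one string or two. Once $R$ is empty the uncrossing is a straightforward planar move (slide $p_1,p_2$ past each other along the now-vacant arc $\gamma$), so your stated worries about over/under data and cyclic positions do not in fact arise.
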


\begin{proof}
Consider a minimal diagram of $\mathcal{T}$. Since each string has at least two crossings with the other strings, we can associate to each end of a string an outermost crossing. If two ends were associated with the same crossing, as in Fig. \ref{figure:non injective}(a), then we could isotope the two ends through the region between them, as in Fig. \ref{figure:non injective}(b), reducing the number of crossings by one. This contradicts the diagram being minimal, and therefore this shows that the association is injective. Hence $c(\mathcal{T})\geq 2n$.
\end{proof}

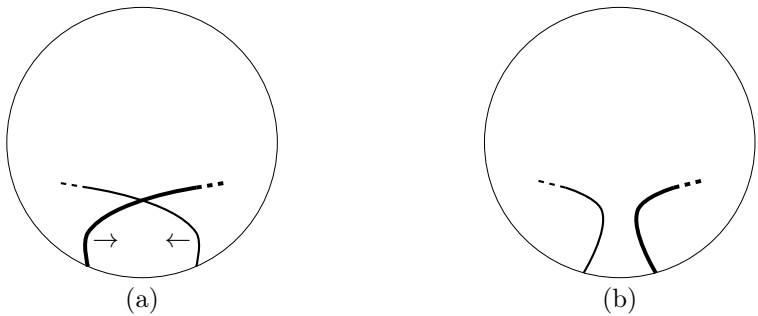
\begin{figure}[ht]

\begin{center}

\psset{unit=6mm,algebraic=true}

$\begin{array}{cc}

\begin{pspicture}(-5,-4)(5,3)

\pscircle[linewidth=0.1pt](0,0){3}

\pscurve(1.2,-2.74955)(1.2,-2)(-1.2,-1)

\psline[linestyle=dashed,dash=2pt 2pt](-1.2,-1)(-1.8,-0.9)

\pscurve[linewidth=1.5pt](-1.2,-2.74955)(-1.2,-2)(1.2,-1)

\psline[linewidth=1.5pt,linestyle=dashed,dash=2pt 2pt](1.2,-1)(1.8,-0.9)

\rput(0.8,-2.2){$\leftarrow$}

\rput(-0.8,-2.2){$\rightarrow$}

\rput(3.5;270){(a)}

\end{pspicture}&

\begin{pspicture}(-5,-4)(5,3)

\pscircle[linewidth=0.1pt](0,0){3}

\pscurve(-0.8,-2.89137)(-0.4,-1.5)(-1.2,-1)

\psline[linestyle=dashed,dash=2pt 2pt](-1.2,-1)(-1.8,-0.85)

\pscurve[linewidth=1.5pt](0.8,-2.89137)(0.4,-1.5)(1.2,-1)

\psline[linewidth=1.5pt,linestyle=dashed,dash=2pt 2pt](1.2,-1)(1.8,-0.85)

\rput(3.5;270){(b)}
\end{pspicture}

\end{array}$

\end{center}

\caption{In a minimal diagram, two strings do not share an outermost crossing.}

\label{figure:non injective}

\end{figure}

\begin{definition}
In a minimal diagram of a tangle, each outermost crossing $c$ defines a single segment $s$ containing an end of a string. Then $s$ is referred to as the {\em external segment} of $c$. The opposite segment with respect to $c$ is called the {\em internal segment} and the other two are the {\em lateral segments} of $c$.
\end{definition}

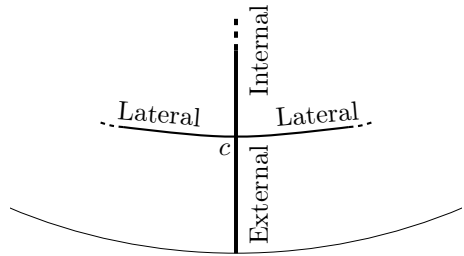
\begin{figure}[ht]

\begin{center}

\psset{unit=6mm,algebraic=true}

\begin{pspicture*}(-5,-3)(5,2.5)

\pscircle[linewidth=0.1pt](0,10){13}

\pscurve(-2.5,-0.2)(-0.5,-0.4)(0.5,-0.4)(2.5,-0.2)

\pscurve[linestyle=dashed,dash=2pt 2pt](-2.5,-0.2)(-2.75,-0.17)(-3,-0.1)
\pscurve[linestyle=dashed,dash=2pt 2pt](2.5,-0.2)(2.75,-0.17)(3,-0.1)

\psline[linewidth=1.5pt](0,-3)(0,1.5)
\psline[linewidth=1.5pt,linestyle=dashed,dash=2pt 2pt](0,1.5)(0,2.2)

\rput(-0.25,-0.7){$c$}
\rput(0.5,-1.7){\rotatebox{90}{External}}
\rput(0.5,1.5){\rotatebox{90}{Internal}}
\rput(1.8,0.1){\rotatebox{5}{Lateral}}
\rput(-1.7,0.1){\rotatebox{-5}{Lateral}}
\end{pspicture*}

\end{center}

\caption{External, internal and lateral segments of an outermost crossing.}

\label{figure:external, internal and lateral}

\end{figure}

\begin{lemma}\label{north west}
In a minimal diagram of a tangle, there is no segment which is both internal and lateral.

\end{lemma}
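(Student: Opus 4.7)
My plan is to argue by contradiction, so suppose $s$ is both internal at an outermost crossing $c_1$ and lateral at an outermost crossing $c_2$, with external segments $e_1,e_2$ and boundary endpoints $p_1,p_2\in\partial B$. The starting observation I would exploit is that because $e_1$ and $s$ are opposite at $c_1$, the concatenation $\alpha=e_1\cup s\cup e_2$ is a simple arc from $p_1$ to $p_2$ passing straight through $c_1$; and because $s$ and $e_2$ are consecutive at $c_2$, $\alpha$ bends at $c_2$. Together with the arc $\beta\subset\partial B$ from $p_1$ to $p_2$ lying on the concave side of this bend, $\alpha$ bounds a disk $D\subset B$.

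First I would record the local structure on $\partial D$: at $c_2$ the concave quadrant is exactly the corner of $D$, so the remaining two segments at $c_2$ (the internal and the opposite-lateral) emanate into quadrants disjoint from $D$ and \emph{no} segment of the diagram enters $D$ through $c_2$. At $c_1$ the segments $e_1$ and $s$ are collinear, so $\partial D$ passes straight through $c_1$ and exactly one of the two lateral segments at $c_1$ enters $D$; by Lemma \ref{injection} applied to $c_1$, that lateral is not external, so it must terminate at a crossing strictly inside $D$.

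The next step is to use this structure to simplify the diagram. Because a small wedge of $D$ at the corner $c_2$ contains no portion of the diagram, I can push the two branches of $t_2$ meeting $c_2$ (namely $e_2$ and the internal segment) across that wedge, eliminating $c_2$, in direct analogy with the reduction move of Fig.~\ref{figure:non injective} used to prove Lemma~\ref{injection}. This would contradict minimality. To make the argument global rather than merely local, I would choose the violating pair $(c_1,c_2)$ with the smallest possible number of crossings in the interior of $D$; the single lateral at $c_1$ entering $D$ and its continuation should then either leave the proposed isotopy unobstructed, or produce a strictly smaller violating pair and close an induction.

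The hard part will be this global step: ruling out interference from the interior of $D$. Locally the move at $c_2$ costs nothing, but along the way one must traverse the lateral at $c_1$ and whatever web of crossings it enters. Minimality of the chosen violator is what should force that interior to be structurally simple enough for the reduction to really decrease the crossing number, thereby contradicting the minimality of the diagram.
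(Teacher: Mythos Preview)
Your setup is sound up to the point where you form the disk $D$ bounded by $\alpha=e_1\cup s\cup e_2$ and $\beta$, and your local analysis at $c_1$ and $c_2$ is correct. The trouble begins with the proposed move. You write that you will ``push the two branches of $t_2$ meeting $c_2$ \dots\ across that wedge, eliminating $c_2$''. But $t_1$ and $t_2$ cross transversally at $c_2$; no local move in a neighbourhood of $c_2$ can remove a single transverse crossing, and the analogy with Fig.~\ref{figure:non injective} fails because there \emph{both} segments meeting the crossing were external, whereas here $s$ is not. To actually remove $c_2$ (or $c_1$) you would have to sweep an arc across all of $D$, and as you yourself note, the lateral at $c_1$ forces $D$ to contain at least one further crossing, so the planar sweep is obstructed. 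Your proposed induction on the size of $D$ is left as a hope rather than an argument; there is no mechanism given for producing a strictly smaller violating pair.

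The paper sidesteps the entire difficulty by working in the ball $B$ rather than in the projection plane. Observe that the subarc $e_1\cup s$ of the string $t_1$ runs from the boundary point $p_1$ to the crossing $c_2$ and has exactly one crossing with the rest of the diagram, namely $c_1$ (since $e_1$ is external and $s$ is a single segment). Now isotope this subarc in $B$---sliding its boundary endpoint along $\partial B$ from $p_1$ to a point next to $p_2$---so that it lands parallel to the external segment $e_2$. The new arc has \emph{no} crossings with the rest of the diagram, because $e_2$ has none; the crossing $c_2$ is preserved, and $c_1$ has disappeared. This drops the crossing number by one, contradicting minimality. No control over the interior of $D$ is needed, because the isotopy is three-dimensional: the arc may pass over or under whatever lies in $D$.
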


\begin{proof}
Suppose there is a minimal diagram of a tangle with a segment $s$ which is both internal and lateral. Hence, as illustrated in Fig. \ref{figure:second crossing}, we can proceed with an isotopy of $s$ and reduce the number of crossings in $\mathcal{D}$, which contradicts its minimality.\end{proof}

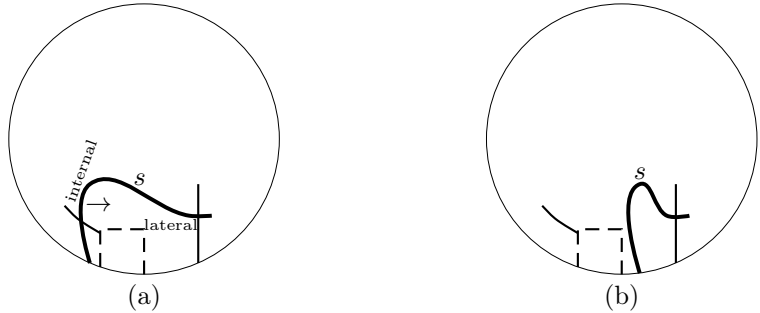
\begin{figure}[ht]

\begin{center}

$\begin{array}{cc}

\psset{unit=6mm,algebraic=true}

\begin{pspicture}(-5,-4)(5,4)

\pscircle[linewidth=0.1pt](0,0){3}

\pscurve[linewidth=1.5pt](-1.2,-2.74955)(-1.2,-1)(1,-1.7)(1.5,-1.7)

\pscurve(2.3;220)(2.3;230)(2.3;245)

\psline(1.2,-2.74955)(1.2,-1)

\psline[linestyle=dashed](-0.972022,-2.83816)(-0.972022,-2)(0,-2)(0,-3)

\rput(-1,-1.5){$\rightarrow$}

\rput(-0.1,-0.9){$s$}

\rput(-1.4,-0.7){\tiny \rotatebox{70}{internal}}
\rput(0.6,-1.9){\tiny lateral}
\rput(3.5;270){(a)}

\end{pspicture}&

\psset{unit=6mm,algebraic=true}

\begin{pspicture}(-5,-4)(5,4)

\pscircle[linewidth=0.1pt](0,0){3}

\pscurve[linewidth=1.5pt](0.4,-2.97321)(0.4,-1)(1,-1.7)(1.5,-1.7)

\pscurve(2.3;220)(2.3;230)(2.3;245)

\psline(1.2,-2.74955)(1.2,-1)

\psline[linestyle=dashed](-0.972022,-2.83816)(-0.972022,-2)(0,-2)(0,-3)

\rput(0.4,-0.75){$s$}

\rput(3.5;270){(b)}

\end{pspicture}

\end{array}$

\end{center}

\caption{In a minimal diagram, segments are not both internal and lateral.}

\label{figure:second crossing}

\end{figure}

\begin{remark}\label{no closed strings}
In a diagram of an $n$-string tangle there is no string containing only lateral segments, since all strings are arcs. Furthermore, there is at least one inner crossing, and at least two non opposite segments adjacent to the inner crossings are lateral.

\end{remark}

\section{Crossing number of essential tangles} In this section we study the minimum crossing number of $n$-string essential tangles and we prove Theorem \ref{tangle general}.\\

{\noindent \bf Proof of Theorem \ref{tangle general}.}
Let $\mathcal{T}$ be a $n$-string essential tangle and $\mathcal{D}$ a minimal diagram of $\mathcal{T}$. By the proof of Lemma \ref{injection}, $\mathcal{D}$ has $2n$ outermost crossings and by Remark \ref{no closed strings}, there is at least one inner crossing. Therefore $c(\mathcal{T})\geq 2n+1$.

Now suppose that $c(\mathcal{T})=2n+1$. Then there is exactly one inner crossing $c$. Of the four segments adjacent to $c$, two are lateral and the other two are internal. Again by Remark \ref{no closed strings}, the lateral segments cannot be opposite in $c$. Therefore the projection of $\mathcal{T}$ is as depicted in Fig. \ref{figure:general case}(a). In each {\em dotted region} there is a collection (possibly empty) of unknotted arcs that go around  the curve that contains the inner crossing (with one overcrossing and one undercrossing), as in Fig. \ref{figure:general case}(b). Each of these regions can slide along the curve and merge with the other regions, so that we could depict Fig. \ref{figure:general case}(a) with a single such region.

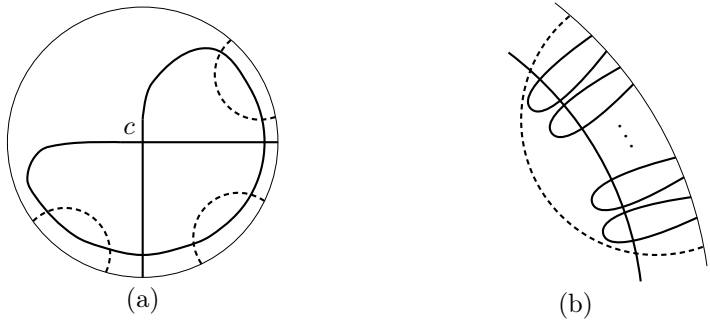
\begin{figure}[ht]

\begin{center}

\psset{unit=6mm,algebraic=true}

$\begin{array}{cc}

\begin{pspicture}(-5,-4)(3,4)

\pscircle[linewidth=0.1pt](0,0){3}

\psline(0,-3)(0,0.5)

\psline(-0.5,0)(3,0)

\pscurve(-0.5,0)(-1.8,-0.05)(-2.28,-0.2)(-2.55526, -0.820848)(-2.37846, -1.2)(-1.5, -2.07846)(-0.92117, -2.31822)(0,-2.5)(0.92117, -2.31822)(1.5, -2.07846)(2.37846, -1.2)(2.7, 0)(2.37846, 1.2)(1.5, 2.07846)(0.2,1.3)(0,0.5)

\psarc[linestyle=dashed,dash=2pt 1.5pt](3;-45){1}{55}{215}

\psarc[linestyle=dashed,dash=2pt 1.5pt](3;30){1}{130}{290}

\psarc[linestyle=dashed,dash=2pt 1.5pt](3;235){1}{-25}{135}

\rput(3.5;270){(a)}
\rput(-0.3,0.3){$c$}
\end{pspicture}&

\psset{unit=18mm,algebraic=true}

\begin{pspicture}(0,0)(3,0.3)

\psarc[linewidth=0.1pt](0,0){3}{8}{52}

\psarc[linestyle=dashed,dash=2pt 1.5pt](3;30){1}{130}{290}

\psarc(0,0){2.5}{7}{53}

\pscurve(3;15)(2.3;16.5)(3;18)

\pscurve(3;21)(2.3;22.5)(3;24)

\pscurve(3;42)(2.3;43.5)(3;45)

\pscurve(3;36)(2.3;37.5)(3;39)

\rput(2.75;30.5){\rotatebox{120}{$\cdots$}}

\rput(2,0.125){(b)}
\end{pspicture}

\end{array}$

\end{center}

\caption{(a) A projection of a $n$-string essential tangle with  $c(\mathcal{T})=2n+1$; (b) A projection of a dotted region.}

\label{figure:general case}

\end{figure}

If the string that contains the inner crossing is not alternating, then the tangle can be isotoped to a tangle with two less crossings. Therefore this string is knotted (by identifying its ends along $\partial B$ we obtain a trefoil) and all other strings are unknotted, and $\mathcal{T}$ is equivalent to the tangle represented in Fig. \ref{figure:tangle general}.

The tangle $\mathcal{T}$ is essential, since we can add a trivial tangle to $\mathcal{T}$ to obtain a link $L$ with $n$ components, one of which is a trefoil and the other $n-1$ components are trivial and parallel\footnote{Two strings of $\mathcal{T}$ are {\em parallel} if they cobound a disk together with two disjoint arcs in $\partial B$.}. With a suitable orientation, the linking number of $L$ is $n-1$. If $\mathcal{T}$ is not essential, then there is a disk in $B$ separating the trefoil string from the trivial strings. Then these strings could be isotoped outside of $B$ and $L$ would be splittable. Therefore the linking number of $L$ would be $0$, which is a contradiction to the linking number of $L$ being $n-1$.
{\hfill \footnotesize $\square$}

\section{Crossing number of essential tangles with all strings unknotted} This section is devoted to the proof of Theorem \ref{tangle unknotted strings}. We study the minimum crossing number of $n$-string essential tangles with all strings unknotted, and identify the respective tangles with this crossing number.

We start by showing the first part of the theorem. For this, let $\mathcal{T}$ be a $n$-string essential tangle with all strings unknotted. By Theorem \ref{tangle general}, $c(\mathcal{T})\geq 2n+1$ and, if $c(\mathcal{T})=2n+1$ then at least one string is knotted. Therefore $c(\mathcal{T})\geq 2n+2$.\

For the following lemmas we let $\mathcal{T}$ be a $n$-string essential tangle with all strings unknotted and $c(\mathcal{T})= 2n+2$, and $\mathcal{D}$ be a minimal diagram of $\mathcal{T}$. Then, from the proof of Lemma \ref{injection}, there are $2n$ outermost crossings and $2$ inner crossings in $\mathcal{D}$, that we denote by $c_a$ and $c_b$.

\begin{lemma}\label{two strings}
There are exactly two strings on the inner crossings of $\mathcal{D}$.

\end{lemma}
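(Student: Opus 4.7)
The plan is to argue by contradiction. Since each of $c_a, c_b$ contributes two branches, the set $S$ of strings appearing at the inner crossings has $|S| \in \{1,2,3,4\}$, and I need to rule out $|S| \in \{1,3,4\}$. The main tool is a branch-count: the $2n+2$ crossings carry $2(2n+2)=4n+4$ total string-branches, of which $4n$ arise at outermost crossings and $4$ at inner crossings; each string uses at least $2$ outermost branches (from its end-associated outermost crossings), leaving $2n$ outermost ``other-branch'' visits and $4$ inner branches to distribute among the $|S|$ strings.

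For $|S|=4$, the four inner branches lie on four distinct strings, so no string passes through both $c_a$ and $c_b$ and no segment joins $c_a$ to $c_b$. The $8$ segments adjacent to the inner crossings then all terminate at outermost crossings; tracing each one via the external/internal/lateral trichotomy and applying Lemma \ref{north west} together with the rigid count of $2n$ internal slots and $4n$ lateral slots among the outermost crossings, I would force two segments into the same slot, producing a bigon that admits a Reidemeister reduction and contradicting minimality.

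For $|S|=3$ a single string $\gamma$ carries two of the inner branches, either as a self-crossing at one of $c_a, c_b$ (with two other strings meeting at the other) or as one branch at each crossing (with distinct partners). In each sub-case the sub-arc of $\gamma$ between its two inner visits bounds a disk in the projection allowing a local simplification that contradicts minimality. For $|S|=1$ a single string $s$ self-crosses at both $c_a$ and $c_b$; enumerating the three cyclic patterns $aabb$, $abab$, $abba$ of the four visits of $s$ to $\{c_a,c_b\}$ along its length, I would find in each a Reidemeister reduction or a contradiction with $s$ being unknotted.

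The main obstacle will be the $|S|=1$ case in its ``alternating clasp'' pattern $abab$, because the two self-crossings do not cancel by a single planar move; I expect to need a $3$-dimensional isotopy argument, e.g.\ closing $s$ by a trivial arc on $\partial B$ so the resulting curve is an unknot (since $s$ is unknotted) and extracting a planar simplification that reduces the total below $2n+2$ crossings.
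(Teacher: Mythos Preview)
Your outline identifies the correct case split, but several of the proposed mechanisms do not work as stated, and the $|S|=1$ case is left explicitly unfinished.

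\textbf{The case $|S|=4$.} Your ``bigon'' contradiction is not the right one. The clean argument (and the one the paper uses) is a segment count: if a string passes through an inner crossing, then \emph{both} of its internal segments must be adjacent to an inner crossing, because by Lemma~\ref{north west} an internal segment cannot arrive at another outermost crossing as a lateral segment, and if it arrives as an internal segment the string has only two crossings and misses $c_a,c_b$ altogether. Hence $|S|$ strings on inner crossings force $2|S|$ internal segments among the eight segment-ends at $c_a\cup c_b$; since Remark~\ref{no closed strings} guarantees at least two of those eight are lateral, $2|S|\le 6$ and $|S|\le 3$ immediately. There is no bigon to produce, and your slot-count at the outermost crossings does not yield one.

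\textbf{The case $|S|=3$.} The claim that ``the sub-arc of $\gamma$ between its two inner visits bounds a disk in the projection allowing a local simplification'' is not justified and is false in general: that sub-arc typically carries many outermost crossings with the other strings, so no Reidemeister reduction is available on it. The paper's route is quite different. With $|S|=3$ the count above forces exactly six internal and two lateral segment-ends at $c_a\cup c_b$, so all remaining lateral segments (at the $2n$ outermost crossings) lie on a single string $s$ threading every outermost crossing. This pins down the projection to the two shadows of Fig.~\ref{figure:ab}; in (a) one of the three strings can be slid off, and in (b) the string $s$ has three self-crossings, so unknottedness forces two consecutive ones to be non-alternating and a reduction follows. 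Your disk argument does not see this structure.

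\textbf{The case $|S|=1$.} You flag the $abab$ pattern as the obstacle and propose closing $s$ along $\partial B$ to an unknot and ``extracting a planar simplification''; this is too vague to be a proof. The paper again uses the segment classification: with one string on the inner crossings exactly two of the eight inner-adjacent segment-ends are internal, and Lemma~\ref{north west} then forces the projection into one of the two explicit shadows of Figs.~\ref{figure:inner crossings connected} and~\ref{figure:inner crossings connected-case two}. In each, the single string has several self-crossings, and unknottedness forces two consecutive ones to be non-alternating, giving a crossing-reducing isotopy. The point is that the reduction is found only after the projection has been completely determined; your pattern enumeration along $s$ alone does not determine the surrounding lateral structure and so cannot locate the reduction.

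In short, the missing idea throughout is to exploit the internal/lateral trichotomy at the \emph{inner} crossings (via Lemma~\ref{north west} and Remark~\ref{no closed strings}) to rigidify the entire projection before looking for a simplification, rather than to search for local moves directly.
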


\begin{proof}
As $\mathcal{D}$ only has two inner crossings then there are at most eight segments adjacent to the inner crossings. From Remark \ref{no closed strings}, at least two segments adjacent to the inner crossings are lateral. Then, in $\mathcal{D}$ at most six segments adjacent to inner crossings can be internal. Hence, there are at most three strings on the inner crossings of $\mathcal{D}$.\\

Suppose there are three strings on the inner crossings. Then, from Lemma \ref{north west}, there are six internal segments adjacent to the inner crossings. From Remark \ref{no closed strings}, two segments adjacent to inner crossings are lateral. Therefore, as there are only two segments adjacent to inner crossings that are not internal, one component $s$ of $\sigma$ is on all outermost crossings containing all lateral segments but the two adjacent to the inner crossings. (See Fig. \ref{figure:three strings}.)

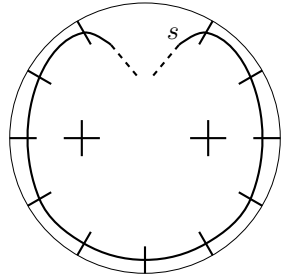
\begin{figure}[ht]

\begin{center}
\psset{unit=6mm,algebraic=true}
\begin{pspicture}(-3,-3)(3,4)
\pscircle[linewidth=0.1pt](0,0){3}
\psline(3;30)(2.4;30) \psline(3;60)(2.4;60) \psline(3;120)(2.4;120) \psline(3;150)(2.4;150) \psline(3;-30)(2.4;-30) \psline(3;-60)(2.4;-60) \psline(3;-90)(2.4;-90) \psline(3;-120)(2.4;-120) \psline(3;-150)(2.4;-150) \psline(3;0)(2.4;0) \psline(3;180)(2.4;180)
\pscurve(2.2;110)(2.7;120)(2.7;150)(2.7;-150)(2.7;-120)(2.7;-90)(2.7;-60)(2.7;-30)(2.7;30)(2.7;60)(2.2;70)
\psline[linestyle=dashed,dash=2pt 2pt](2.2;110)(1.4;97.5) \psline[linestyle=dashed,dash=2pt 2pt](2.2;70)(1.4;82.5)
\psline(-1.4,-0.4)(-1.4,0.4)
\psline(-1.8,0)(-1,0)
\psline(1.4,-0.4)(1.4,0.4)
\psline(1.8,0)(1,0)
\rput(2.4;75){$s$}
\end{pspicture}
\end{center}

\caption{If three strings are on the inner crossings, there is a string containing all lateral segments.}

\label{figure:three strings}

\end{figure}

\begin{figure}[ht]
\begin{center}
\psset{unit=6mm,algebraic=true}
$\begin{array}{cc}
\begin{pspicture}(-5,-4)(5,4)

\pscircle[linewidth=0.1pt](0,0){3}

\psline(3;60)(3;-60)
\psline(3;120)(3;-120)

\pscurve[linewidth=1.5pt](-3,0)(-1,0)(-1,0)(-0.4,0.2)(2.5;120)(2.5;180)(2.5;240)(2.5;270)(2.5;300)(2.5;0)(2.5;60)(0.4,0.2)(1,0)(1,0)(3,0)

\psarc[linestyle=dashed,dash=2pt 1.5pt](0,-3){1}{10}{170}
\rput(2.4;70){$s$}
\rput(3.5;270){(a)}
 
\end{pspicture}&
\begin{pspicture}(-5,-4)(5,4)

\pscircle[linewidth=0.1pt](0,0){3}

\psline(3;120)(3;-120)

\pscurve[linewidth=1.5pt](1.4,-2.6533)(1.4,0.4)(1.4,0.4)(1.2,1.4)(2.5;90)(2.5;120)(2.5;180)(2.5;240)(2.5;270)(2.5;300)(2.5;330)(2.3,-0.2)(1.8,0)(1.8,0)(1,0)(1,0)(0.7,-0.2)(0.55,-1)(0.52095,-2)(0.52095,-2)(0.52095,-2.95442)

\pscurve[linewidth=2pt](-3,0)(-1,0)(-1,0)(-0.7,-0.2)(-0.55,-1)(-0.52095,-2)(-0.52095,-2)(-0.52095,-2.95442)


\psarc[linestyle=dashed,dash=2pt 1.5pt](0,3){1}{-170}{-10}
\rput(2.2;50){$s$}
\rput(3.5;270){(b)}
\end{pspicture}
\end{array}$
\end{center}

\caption{If three strings are on the inner crossings, there is always an isotopy reducing the number of crossings.}
\label{figure:ab}
\end{figure}
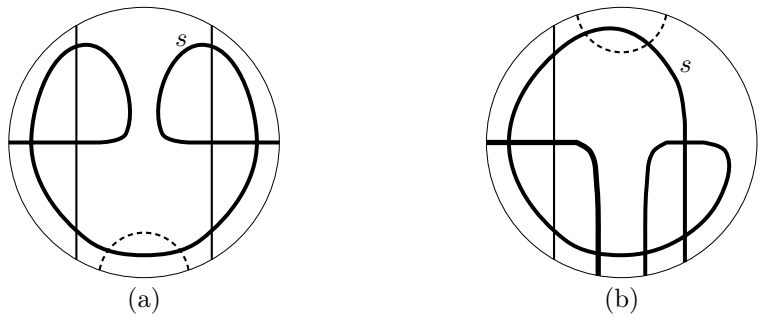

\noindent Either $s$ is on both inner crossings, as in Fig. \ref{figure:ab}(a), and each inner crossing is adjacent to three internal segments, or $s$ is on a single inner crossing and the other inner crossing has all ends adjacent to internal segments, as in Fig. \ref{figure:ab}(b). In the case of Fig. \ref{figure:ab}(a) we can reduce the number of crossings in the diagram by an isotopy of one of the strings on the inner crossings. And in the case of Fig. \ref{figure:ab}(b), as $s$ is unknotted, two of its three self-crossings in the diagram are not alternating, which allows us to isotope $\sigma$ reducing the crossing number of the diagram. In both cases we have a contradiction with $\mathcal{D}$ being a minimal crossing number diagram of $\mathcal{T}$. Consequently, the number of strings on the inner crossings of $\mathcal{D}$ is at most two.\\

Assume now that there is only one string on the inner crossings. Then exactly two segments adjacent to the inner crossings are internal. Suppose these segments are adjacent to the same inner crossing, say $c_a$. It follows they have to be consecutive, otherwise more than one string would be on the inner crossings or we would have a contradiction to Lemma \ref{north west}. Then $c_b$ cannot be adjacent to three or four lateral segments, because in this case we would be in contradiction to Lemma \ref{north west}. Hence, two consecutive segments adjacent to $c_b$ are lateral and the other two segments are adjacent to the two inner crossings. In Fig. \ref{figure:inner crossings connected} we have an illustration of the projection of this case.\\

\begin{figure}[ht]
\begin{center}

\psset{unit=6mm,algebraic=true}
\begin{pspicture}(-5,-3)(5,3)

\pscircle[linewidth=0.1pt](0,0){3}

\pscurve(-1,0)(-0.5,0.1)(1.2,1)(1.4,0.4)

\pscurve(1,0)(0.5,-0.1)(-1.2,-1)(-1.4,-0.4)

\pscurve(-1.4,0.4)(2.5;110)(2.5;90)(2.5;60)(2.5;30)(2.5;0)(2.5;-30)(2.5;-60)(2.5;-90)(2.5;240)(2.5;210)(-2.3,-0.2)(-1.8,0)

\psline(3,0)(1,0)  \psline(1.4,-2.6533)(1.4,0.4) \psline(-1.4,-0.4)(-1.4,0.4)
\psline(-1.8,0)(-1,0)

\psarc[linestyle=dashed,dash=2pt 1.5pt](0,-3){1}{10}{170}
\rput(-1.7,0.3){$c_a$}
\rput(1.7,0.3){$c_b$}

\end{pspicture}
\end{center}

\caption{Only one string on the inner crossings with the two internal segments on the same inner crossing.}
\label{figure:inner crossings connected}
\end{figure}
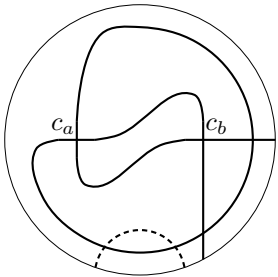
\noindent Suppose now each inner crossing is adjacent to exactly one internal segment. The ends of the inner crossings opposite to the internal segments cannot be adjacent to the other inner crossings or to a lateral segment in the direction of the outermost crossing adjacent to the internal segment it opposes, otherwise we would have a contradiction to only one string being on the inner crossings or to Lemma \ref{north west}. Therefore, the ends of the inner crossings opposite to the internal segments are connected to lateral segments in the direction of the outermost crossing adjacent to the other internal segment. The other segments adjacent to the inner crossings and outermost crossings are uniquely determined at this point. For this case we obtain the diagram shadow as in Fig. \ref{figure:inner crossings connected-case two}.\\

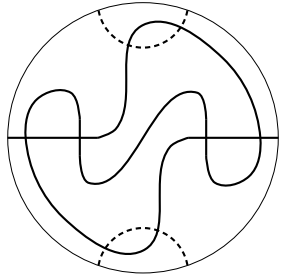
\begin{figure}[ht]
\begin{center}
\psset{unit=6mm,algebraic=true}
\begin{pspicture}(-5,-3)(5,3)

\pscircle[linewidth=0.1pt](0,0){3}

\pscurve(-1,0)(-0.6,0.2)(2.5;90)(2.5;45)(2.6;0)(2.5;-20)(1.6,-1)(1.4,-0.4)

\pscurve(1,0)(0.6,-0.2)(2.5;-90)(2.5;225)(-2.6;0)(2.5;160)(-1.6,1)(-1.4,0.4)

\pscurve(-1.4,-0.4)(-1.2,-1)(1.2,1)(1.4,0.4)

\psline(3,0)(1,0) \psline(-3,0)(-1,0) \psline(1.4,-0.4)(1.4,0.4) \psline(-1.4,-0.4)(-1.4,0.4)

\psarc[linestyle=dashed,dash=2pt 1.5pt](0,-3){1}{10}{170}
\psarc[linestyle=dashed,dash=2pt 1.5pt](0,3){1}{-170}{-10}

\end{pspicture}
\end{center}

\caption{Only one string on the inner crossings with the two internal segments on distinct inner crossings.}
\label{figure:inner crossings connected-case two}
\end{figure}

\noindent In this section all strings of the tangles are unknotted. Consequently, in this two cases, illustrated in Figs. \ref{figure:inner crossings connected} and \ref{figure:inner crossings connected-case two}, the string on the inner crossings is also unknotted. This necessarily means that two consecutive self-crossings are not alternating. Considering all consecutive self-crossings possibilities for the two cases we can obtain a diagram with a smaller crossing number than the one of $\mathcal{D}$, which contradicts $\mathcal{D}$ being a minimal crossing number diagram.
\end{proof}

\begin{lemma}\label{both inner}
If on an inner crossing there is only one string $s$, then $s$ is on the other inner crossing.
\end{lemma}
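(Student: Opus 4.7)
The plan is to argue by contradiction. Suppose that all four segments adjacent to $c_a$ lie on a single string $s$ but that $s$ does not appear among the segments at $c_b$. By Lemma~\ref{two strings}, exactly two strings are on the inner crossings, so $c_b$ must be a self-crossing of some distinct string $t\neq s$. Let $L_s\subset s$ denote the sub-arc traversed between the two visits of $s$ to $c_a$; together with the double point at $c_a$ it forms a simple closed curve in the projection disk $p(B)$. Define $L_t$ analogously. Since $\mathcal{D}$ is minimal, Reidemeister~I cannot be applied at $c_a$, so $L_s$ must pass through at least one outermost crossing, and similarly for $L_t$.

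The key combinatorial step examines the outermost crossings lying on $L_s$. At such a crossing $o$, the two $s$-segments on $L_s$ must form the lateral pair of $o$ (since the endpoints of $s$ lie on its tails, outside $L_s$), so the external-internal pair at $o$ belongs to some string $a$ ending at $o$. By Lemma~\ref{north west}, $a$'s internal segment at $o$ cannot be lateral at any other crossing, and because segments contain no interior crossings while externals must reach $\partial B$ in the exterior of $L_s$, the next crossing along $a$'s internal must be either another outermost of $a$ lying on $L_s$, or the inner crossing $c_b$ (which is possible only when $a=t$). In particular, no outermost crossing can lie strictly in the interior of $L_s$; the symmetric statement holds for $L_t$.

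I would then rule out the remaining configurations. If $t$ had any outermost on $L_s$, tracing $t$'s arc inward would force $c_b$, and hence the sub-arc $L_t$, into the interior of $L_s$; any outermost crossing of $L_t$ would then lie in the interior of $L_s$, contradicting the previous paragraph (and $L_t$ cannot be a pure loop, else Reidemeister~I reduces $c_b$). By symmetry $s$ has no outermost on $L_t$, so $L_s\cap L_t=\varnothing$ and the two loops are mutually exterior in $p(B)$. The interior of $L_s$ is then filled only by disjoint internal chords coming from strings $u_i\neq s,t$, and analogously for $L_t$. In this final case, the planar sub-diagram inside $L_s$ consists solely of the $u_i$-chords attached to $s$'s loop, so a local sequence of Reidemeister~II moves empties the interior of $L_s$, after which Reidemeister~I removes $c_a$, strictly decreasing the total crossing count and contradicting the minimality of $\mathcal{D}$. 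The main obstacle is making this last step rigorous: for every possible chord configuration inside $L_s$ with arbitrary over/under data at the outermost crossings, one must verify that such a sequence of local moves (possibly combined with flypes) strictly reduces the crossing number without introducing new crossings in the exterior of $L_s$.
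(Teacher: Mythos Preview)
Your final step has a genuine gap that is not just a matter of bookkeeping. Even after you have reduced the interior of $L_s$ to a family of disjoint chords $u_i$, you cannot in general empty it by Reidemeister~II moves: if some chord $u_i$ passes over $L_s$ at one of its two crossings and under at the other, then $u_i$ genuinely links the loop $L_s$, and no sequence of local moves or flypes will remove those two crossings without creating new ones elsewhere. In that configuration the diagram $\mathcal{D}$ may well be minimal --- for an \emph{inessential} tangle. So the contradiction with minimality that you are aiming for simply need not occur, and the obstacle you flag is not a technicality but a wrong turn.

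The paper takes a much shorter route and contradicts a different hypothesis. From the assumption that $s$ misses $c_b$, it observes that both internal segments of $s$ must land on $c_a$ and that the segments opposite to them at $c_a$ are lateral; Lemma~\ref{north west} then forces the projection into the shape of Fig.~\ref{figure:FigureA}, in which $s$ (together with whatever crosses its loop) sits in a region of the disk disjoint from $c_b$ and the string through it. That immediately gives a properly embedded disk in $B$ separating the components of $\sigma$, so $\mathcal{T}$ is inessential --- contradicting the standing hypothesis that $\mathcal{T}$ is essential. Your intermediate structural analysis (that $L_s$ and $L_t$ are disjoint and mutually exterior, with only chords inside each) already establishes exactly this picture; the correct finishing move is to read off a separating disk, not to search for a crossing reduction.
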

\begin{proof}
Suppose that on an inner crossing there is only one string $s$ and that $s$ is not on the other inner crossing. Then, the two internal segments of $s$ are adjacent to the same inner crossing, and the respective opposite segments are lateral. Hence, not to be in contradiction with Lemma \ref{north west}, we have a diagram projection of $\mathcal{D}$ as in Fig. \ref{figure:FigureA}, which is a contradiction to $\mathcal{T}$ being essential.\end{proof}

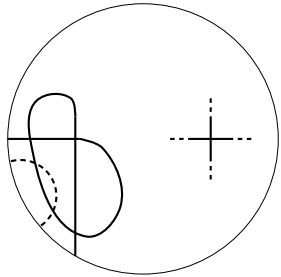
\begin{figure}[ht]
\begin{center}
\psset{unit=6mm,algebraic=true}
\begin{pspicture}(-5,-3)(5,3)

\pscircle[linewidth=0.1pt](0,0){3}

\pscurve(-1.5,0.5)(-1.6,0.9)(2.5;160)(2.5;180)(2.5;240)(-0.5,-1.5)(-0.9,-0.2)(-1.4,0)
\psline(3;-120)(-1.5,0.5)

\psline(1,0)(2,0) \psline(1.5,-0.5)(1.5,0.5) 
\psline[linestyle=dashed,dash=2pt 2pt](1.5,0.6)(1.5,0.9) \psline[linestyle=dashed,dash=2pt 2pt](1.5,-0.6)(1.5,-0.9) 
\psline[linestyle=dashed,dash=2pt 2pt](0.6,0)(0.9,0) \psline[linestyle=dashed,dash=2pt 2pt](2.1,0)(2.4,0) 

\psarc[linestyle=dashed,dash=2pt 1.5pt](3;205){0.8}{-55}{105}

\psline(-3,0)(-1.4,0)

 
\end{pspicture}
\end{center}

\caption{Only one string on an inner crossing and other strings on the other crossing.}
\label{figure:FigureA}
\end{figure}

{\noindent \bf Proof of Theorem \ref{tangle unknotted strings}.}
As before in this section, let $\mathcal{T}$ be a $n$-string essential tangle with all strings unknotted, and $\mathcal{D}$ a minimal diagram of $\mathcal{T}$. By the first part of the proof, we have $c(\mathcal{T})\geq 2n+2$. To prove the second part of the theorem suppose that $c(\mathcal{T})=2n+2$. As observed before, $\mathcal{D}$ has $2n$ outermost crossings and $2$ inner crossings.\\
From Lemma \ref{two strings} exactly two strings are on the inner crossings of $\mathcal{D}$. We denote the two strings on the inner crossings by $s_1$ and $s_2$. From Lemma \ref{both inner}, without loss of generality we can assume that $s_1$ is on both inner crossings.\\

(i) Suppose $s_2$ is only on the inner crossing $c_b$. Then in the projection corresponding to $\mathcal{D}$, $s_2$ contains two internal segments adjacent to $c_b$. One internal segment of $s_1$ is adjacent to $c_b$, and the opposite segment to this is adjacent to both $c_a$ and $c_b$, since $\mathcal{D}$ is minimal. One segment consecutive to the latter in $c_a$ is internal, and the remaining two segments adjacent to $c_a$ are lateral. (See Fig. \ref{figure:FigureB}.)\\

\begin{figure}[ht]
\begin{center}
\psset{unit=6mm,algebraic=true}
\begin{pspicture}(-5,-3)(5,3)

\pscircle[linewidth=0.1pt](0,0){3}

\psline(3;60)(3;-60)

\pscurve[linewidth=1.5pt](3;-120)(-1.5,0.5)(-1.5,0.5)(-1.4,1.5)(2.5;110)(2.5;90)(2.5;60)(2.5;30)(2.5;0)(2.5;-30)(2.5;-60)(2.5;-90)(2.5;-120)(2.5;-150)(-2.3,-0.2)(-1.8,0)(-1.8,0)(3,0)

\rput(-1.15,0.3){$c_a$}
\rput(1.8,0.3){$c_b$}
\rput(-1.8,1.2){$s_1$}
\rput(1.15,1.2){$s_2$}
\psarc[linestyle=dashed,dash=2pt 1.5pt](0,-3){1}{10}{170}
\end{pspicture}
\end{center}

\caption{Diagram when $s_2$ is only on the inner crossing $c_b$.}
\label{figure:FigureB}
\end{figure}
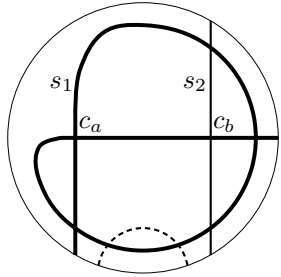

\noindent As $\mathcal{D}$ is a minimal diagram of $\mathcal{T}$, following the string $s_2$ the crossings in the diagram are alternating. As $s_1$ is unknotted, it is not alternating. Therefore it has two consecutive self-crossings of the same type, which are on opposite sides of $c_b$, because otherwise we can reduce the crossing number of $\mathcal{D}$. Then, from these two statements we can assume that $\mathcal{D}$ is alternating. There are two possible alternating diagrams but they correspond to homeomorphic tangles, which are depicted in Fig. \ref{figure:tangle unknotted strings}(a).\\

\noindent The tangle defined by $s_1$ and $s_2$ in $B$ is essential. In fact, we can add a $2$-string trivial tangle to the tangle with the strings $s_1$ and $s_2$ to obtain the alternating knot $6_2$ from Rolfsen's list in \cite{Rolfsen} that we denote by $K$. Suppose there is a disk separating $s_1$ and $s_2$. As $s_2$ is unknotted we could isotope it to the complement of $B$ in $S^3$, and realize that the knot $K$ is the connected sum of two unknots. This is a contradiction as $K$ is knotted. All other strings are parallel. Then if $\mathcal{T}$ is not essential there is a disk separating $s_1$ and $s_2$ from the other strings, and it suffices to study the case when the number of strings in $\mathcal{T}$ is three. Denote the third string by $s$ and suppose there is a disk in $B$ separating $s$ from $s_1\cup s_2$. Then, from the diagram $\mathcal{D}$, the string $s$ cobounds a disk $D_1$ with an arc in $\partial B$ intersecting $s_1$ once, and disjoint from $s_2$, and cobounds a disk $D_2$, with another arc in $\partial B$, disjoint from $s_1\cup s_2$. Therefore, the union of these two disks intersects $s_1\cup s_2$ in a point of $s_1$. As $s_1$ is not knotted, $D_1\cup D_2$ separates a ball from $B$ and an unknotted arc of $s_1$ from $s_1\cup s_2$ in it. Hence, this arc can be isotoped to an arc in $D_1$. With a proper choice of $D_1$, we can isotope $s_1$ in a way that reduces the crossing number of the tangle defined by $s_1$ and $s_2$ in $B$, which contradicts the crossing number minimality of the diagram with $s_1$ and $s_2$. Therefore, the tangle $\mathcal{T}$ is essential.\\

(ii) Suppose $s_1$ and $s_2$ are on both inner crossings. In this way, each inner crossing is adjacent to an internal segment for each string $s_1$ and $s_2$ and these segments are necessarily consecutive. For each inner crossing we can assume that the internal segments separate their opposite segments from the other inner crossings. In fact, if that is not the case, as in Fig. \ref{figure:FigureC}(a), then at least a segment opposite to an internal segment $l$ is lateral and in the direction of an outermost crossing adjacent to the same inner crossing $c$. In this case, we can isotope a string by taking the internal segment $l$ over (or under, depending on the outermost crossing type) the segment opposite to the other internal segment adjacent to $c$, without increasing the crossing number. 

\begin{figure}[ht]
\begin{center}
\psset{unit=6mm,algebraic=true}
$\begin{array}{cc}
\begin{pspicture}(-5,-4)(5,3)

\pscircle[linewidth=0.1pt](0,0){3}

\pscurve[linewidth=1.5pt](-1.5,0.5)(-1.6,0.9)(2.5;160)(2.5;180)(2.5;210)(2.5;240)(2.5;245)

\psline[linewidth=1.5pt](3;-120)(-1.5,0.5)

\psline(1,0)(2,0) \psline(1.5,-0.5)(1.5,0.5) 
\psline[linestyle=dashed,dash=2pt 2pt](-0.4,0)(-0.1,0)
\psline[linewidth=1.5pt,linestyle=dashed,dash=2pt 2pt](2.5;245)(2.5;260)
\rput(-1.7,-0.8){$l$}
\rput(-1.3,0.3){$c$}
\rput(-1,-1){$\nearrow$}

\psline(-3,0)(-0.5,0)

\rput(3.5;270){(a)} 
\end{pspicture}&
\begin{pspicture}(-5,-4)(5,3)

\pscircle[linewidth=0.1pt](0,0){3}

\pscurve[linewidth=1.5pt](-1.5,0.5)(-1.6,0.9)(2.5;160)(2.5;180)(2.5;210)(2.5;240)(2.5;245)
\psline[linewidth=1.5pt](-1.5,0.5)(-1.5,-0.5)
\pscurve[linewidth=1.5pt](-1.5,-0.5)(-1.4,-1.1)(-1,-1.5)(3;110)
\psline(-3,0)(0,0)
\psline[linestyle=dashed,dash=2pt 2pt](0.1,0)(0.4,0)
\psline[,linewidth=1.5pt,linestyle=dashed,dash=2pt 2pt](2.5;245)(2.5;260)


\psline(1,0)(2,0) \psline(1.5,-0.5)(1.5,0.5) 

\rput(-1.7,-0.8){$l$}
\rput(-1.3,0.3){$c$}

\rput(3.5;270){(b)} 
\end{pspicture}
\end{array}$
\end{center}

\caption{When $s_1$ and $s_2$ are on both inner crossings, we can assume that the inner crossings are as in (b).}
\label{figure:FigureC}
\end{figure}
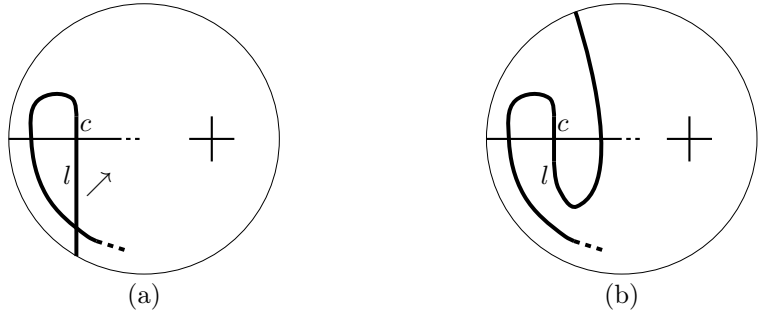

\noindent After this isotopy, $c$ is as claimed before: the internal segments of $c$ separate the corresponding opposite segments and the other inner crossing. (See Fig. \ref{figure:FigureC}(b).) Under this setting we are left with only one possible diagram projection of $s_1$ and $s_2$, as in Figure \ref{figure:FigureD}. There are two possible cases for the crossings of $\mathcal{D}$ in this case. Either the diagram is alternating, as in Fig. \ref{figure:tangle unknotted strings}(b), or the diagram is obtained from the alternating one by changing the crossings of a twist box, as in Fig. \ref{figure:tangle unknotted strings}(c).\\

\begin{figure}[ht]
\begin{center}
\psset{unit=6mm,algebraic=true}
\begin{pspicture}(-5,-3)(5,3)

\pscircle[linewidth=0.1pt](0,0){3}


\psline[linewidth=1.5pt](3;-120)(-1.5,0.5)
\pscurve[linewidth=1.5pt](-1.5,0.5)(-1.4,1.5)(2.5;110)(2.5;90)(2.5;70)(1.4,1.5)(1.5,0.5)
\psline[linewidth=1.5pt](3;-60)(1.5,0.5)

\psline(0.5,1)(0.5,2.95804)
\pscurve(1.2,0)(1,0.1)(0.6,0.5)(0.5,1)
\psline(1.8,0)(1.2,0)
\pscurve(1.8,0)(2.3,-0.2)(2.5;-30)(2.5;-60)(2.5;-90)(2.5;-120)(2.5;-150)(-2.3,-0.2)(-1.8,0)
\psline(-1.8,0)(-1.2,0)
\pscurve(-1.2,0)(-1,0.1)(-0.6,0.5)(-0.5,1)
\psline(-0.5,1)(-0.5,2.95804)

\psarc[linestyle=dashed,dash=2pt 1.5pt](0,-3){1}{10}{170} 
\psarc[linestyle=dashed,dash=2pt 1.5pt](3;50){1}{150}{310} 
\end{pspicture}
\end{center}

\caption{Diagram projection when $s_1$ and $s_2$ are on both inner crossings.}
\label{figure:FigureD}
\end{figure}
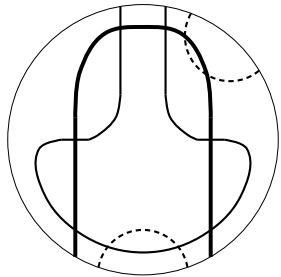

\noindent
To prove that these tangles are essential first note that we can add a $2$-string trivial tangle to the tangle with the strings $s_1$ and $s_2$ to get the square knot, in the case of Fig. \ref{figure:tangle unknotted strings}(b), or the granny knot, in the case of Fig. \ref{figure:tangle unknotted strings}(c). Then, if there is a properly embedded disk in $B$ separating the strings $s_1$, $s_2$ we would have a $2$-bridge decomposition of a composite knot, which is a contradiction because $2$-bridge knots are prime. To prove that the $n$-string tangle $\mathcal{T}$ is essential in these cases we now follow a similar argument as the one used in case (i).\\

The three tangles depicted in Fig. \ref{figure:tangle unknotted strings} are not equivalent. In fact, suppose there is a homeomorphism between two of these tangles, and consider the $2$-string trivial tangle added to the tangles in the argument and the respective knots obtained from adding this trivial tangle. Consider $B$ in $S^3$ and the disk of the diagram in $\mathbb{R}^2$. Note that the diagram of this trivial tangle is a collection of two disjoint arcs in the complement in $\mathbb{R}^2$ of the disk from the diagram of each essential tangle, with one arc at the top of this disk of the diagram and the other at the bottom. The homeomorphism between the essential tangles restricted to the boundary of the ball $B$ can be isotoped to either the identity or reflexion of $\partial B$, and from here extended to the ball in the complement of $B$ as the identity or reflection on the plane. Therefore, if two of these essential tangles are homeomorphic the knots obtained are also equivalent, which is a contradiction. Then, these three essential tangles are not equivalent. 
{\hfill \footnotesize $\square$}

\section{Crossing number of essential tangles with closed strings}

Let $\mathcal{T}$ be a $n$-string $k$-loop tangle with $k>0$, and $\mathcal{D}$ be a minimal diagram of $\mathcal{T}$. 

\begin{definition}
A {\em block} $\beta$ (with respect to $\mathcal{D}$) is a subtangle of $\mathcal{T}$ defined by a set of connected components of $\sigma$ such that:
\begin{enumerate}
\item if $l\in\beta$ is a loop, then every string or loop that crosses $l$ is in $\beta$;
\item if $s\in\beta$ is a string, then every loop that crosses $s$ is in $\beta$.
\end{enumerate}

\noindent The union of two blocks is a block and a block $\beta_1\subsetneq\beta_2$ is called a {\em subblock} of $\beta_2$. We call a crossing of a block $\beta_1$ with another block $\beta_2$ {\em outermost} (in $\beta_1$) if it separates the projection of $\beta_1$ into two components such that one of them has no crossings with other blocks.
\end{definition}

\noindent Notice that a string $s$ that doesn't cross any loop is a block, and a crossing of the string $s$ is outermost if and only if it is outermost as a crossing of the block $s$.

\begin{lemma}\label{crossingnumberminimaltangle}
If $\mathcal{T}$ has no subblocks, then $c(\mathcal{T})\geq 2(n+k-1)$.
\end{lemma}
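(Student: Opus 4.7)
The plan is to reformulate the crossing count via an auxiliary graph $H$ whose vertices are the connected components of $\sigma$ and whose edges record pairs of distinct components that cross in a fixed minimal diagram $\mathcal{D}$ and for which at least one of the two components is a loop. The proof then reduces to two independent steps: first, showing that $H$ is connected; second, showing that every edge of $H$ contributes at least two crossings to $\mathcal{D}$. Together these give $c(\mathcal{T}) \geq 2|E(H)| \geq 2(n+k-1)$, since a connected graph on $n+k$ vertices has at least $n+k-1$ edges.

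For the first step, I would argue by contrapositive. If a connected component of $H$ corresponds to a proper subset $\beta$ of the components of $\sigma$, then $\beta$ satisfies both block axioms, because by construction no loop in $\beta$ crosses a component outside $\beta$, and no string in $\beta$ crosses a loop outside $\beta$. Thus $\beta$ would be a proper subblock, contradicting the hypothesis. Since $n + k \geq 2$, this forces $H$ to be connected and so to have at least $n+k-1$ edges.

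For the second step, I would use a parity argument. The projection disk $\pi(B)$ is simply connected, so $H_1(\pi(B);\mathbb{Z}/2)=0$. Any loop $l$ of $\mathcal{T}$ projects to an immersed closed curve in the interior of $\pi(B)$, representing the zero class in $H_1(\pi(B);\mathbb{Z}/2)$. A second component $x$ is either another loop, giving a class in $H_1(\pi(B);\mathbb{Z}/2)$, or a string, giving a class in $H_1(\pi(B),\partial\pi(B);\mathbb{Z}/2)$; in either case the $\mathbb{Z}/2$ intersection pairing of $\pi(l)$ with $\pi(x)$ vanishes. Hence the number of transverse intersections between $\pi(l)$ and $\pi(x)$, which coincides with the number of crossings between $l$ and $x$ in $\mathcal{D}$, is even, and therefore at least $2$ whenever the edge exists in $H$.

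The main obstacle I expect is in cleanly justifying the parity in step two when the projection of a loop is immersed rather than embedded, since a direct Jordan-curve separation fails. The homological argument sketched above is the cleanest way to bypass this issue, but a combinatorial alternative that resolves each self-intersection of $\pi(l)$ and tracks the parity of crossings with a given arc $\pi(x)$ should also succeed. Once that parity fact is in hand, the global estimate falls out immediately from the elementary edge bound for a connected graph.
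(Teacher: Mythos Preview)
Your argument is correct and takes a somewhat different route from the paper's. The paper proceeds by an explicit inductive construction: start with a loop $l_1$ together with all strings that cross it, then repeatedly adjoin a new loop $l_i$ that crosses the current subtangle (such a loop exists precisely because the subtangle cannot be a proper block), together with any new strings that cross $l_i$; each new loop contributes at least two new crossings with what was already there, and each new string contributes at least two crossings with $l_i$, so after $k$ stages one has absorbed all of $\mathcal{T}$ and accumulated at least $2(n+k-1)$ crossings. Your proof compresses this into a single graph-theoretic statement: the auxiliary graph $H$ is connected, hence has at least $n+k-1$ edges, and each edge accounts for at least two crossings by parity. Both arguments rest on the same parity fact, which the paper invokes without comment (``at least twice'') while you justify it via the mod~$2$ intersection pairing on the projection disk. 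Your version is cleaner and makes the underlying combinatorics explicit; the paper's step-by-step build-up is in effect constructing a spanning tree of your $H$ one loop at a time.
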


\begin{proof}
Let $l_1$ be any loop of $\mathcal{T}$ and consider the subtangle $\mathcal{T}_1$ of $\mathcal{T}$ formed by $l_1$ and all the $n_1$ strings that cross $l_1$. Since every string of $\mathcal{T}_1$ crosses $l_1$ at least twice, then $c(\mathcal{T}_1)\geq 2 n_1$. If $\mathcal{T}_1\neq \mathcal{T}$, then, since $\mathcal{T}_1$ is not a subblock of $\mathcal{T}$, there is a loop $l_2$ in $\mathcal{T}-\mathcal{T}_1$ that crosses some component of $\mathcal{T}_1$, at least twice. Consider the subtangle $\mathcal{T}_2$ of $\mathcal{T}$ formed by $l_1$, $l_2$ and all the $n_2$ strings that cross $l_1$ or $l_2$. Again, since every string of $\mathcal{T}_2$ crosses $l_2$ at least twice, then $c(\mathcal{T}_2)\geq 2 (n_2+1)$. By repeating this argument, adding at stage $i$, one loop $l_i$ and the new strings of $\mathcal{T}$ that cross $l_i$, we obtain a subtangle $\mathcal{T}_i$ of $\mathcal{T}$ such that $c(\mathcal{T}_i)\geq 2 (n_i+i-1)$. At stage $k$ all the loops of $\mathcal{T}$ have been added, and, since $\mathcal{T}_k$ is not a subblock of $\mathcal{T}$, then $\mathcal{T}_k=\mathcal{T}$, which shows that
$
c(\mathcal{T})\geq 2 (n+k-1).
$

\end{proof}

We notice that this inequality is sharp, as the tangle $\mathcal{T}$ of the Fig. \ref{figure:tangle closed strings}, with one loop and $n$ strings, verifies the equality $c(\mathcal{T})=2n$. A similar argument to that on the end of the proof of Theorem 1 shows that $\mathcal{T}$ is essential. To see that this is the only $n$-string $1$-loop essential tangle with $c(\mathcal{T})=2n$, observe that every string must cross the loop twice so that there are no crossings between any two strings. 

\

Now we prove Theorem \ref{tangle closed strings}.\\




{\noindent \bf Proof of Theorem \ref{tangle closed strings}.}
Consider a partition of $\mathcal{T}$ into blocks. First note that if two blocks $\mathcal{T}_1$ and $\mathcal{T}_2$ of this partition cross at least twice, with $c(\mathcal{T}_i)\geq 2 (n_i+k_i-1)$, where $n_i$ and $k_i$ are respectively the number of strings and loops of $\mathcal{T}_i$, $i=1,2$, then $\mathcal{T}_1\cup \mathcal{T}_2$ is a subblock of $\mathcal{T}$ such that
$$
c(\mathcal{T}_1\cup \mathcal{T}_2)\geq 2(n_1+k_1-1)+2(n_2+k_2-1)+2=2\left((n_1+n_2)+(k_1+k_2)-1\vphantom{15mm}\right).
$$
We can therefore replace $\mathcal{T}_1$ and $\mathcal{T}_2$ by $\mathcal{T}_1\cup \mathcal{T}_2$ obtaining a smaller partition of $\mathcal{T}$ into blocks which have the property stated in Lemma \ref{crossingnumberminimaltangle}. 

\noindent Consider the collection of all minimal subblocks of $\mathcal{T}$ with respect to a minimal diagram of $\mathcal{T}$. Clearly this collection is a partition of $\mathcal{T}$ by blocks that verify the inequality in Lemma \ref{crossingnumberminimaltangle}. By repeating the above process on this collection, we eventually obtain a partition of $\mathcal{T}$ such that two blocks intersect each other at most once, and that also verifies the inequality in Lemma \ref{crossingnumberminimaltangle}. Next we prove that this partition has only one block. Being this the case, we have 
$$c(\mathcal{T})\geq 2(n+k-1),$$
\noindent as in the statement of the theorem.

\noindent In fact, suppose that the partition has more than one block, and that each block intersects each other at most once. Then all these blocks have at least two outermost crossings. 

\noindent Consider a block $\beta_1$ and an outermost crossing $c_1$ of $\beta_1$ (with $\beta_2$). Let $R_1$ be the outer region of the projection disk bounded by the component of $\beta_1-c_1$ that has no crossings with other blocks and that is the smallest that contains one of the two components of $\beta_2-c_1$. (See Fig. \ref{figure:Outerregion}.)

\begin{figure}[ht]
\begin{center}
\psset{unit=6mm,algebraic=true}
\begin{pspicture}(-5,-3)(5,3)
\begin{psclip}{\pscircle[linestyle=none](0,0){3}}
\pscurve[fillcolor=lightgray,fillstyle=solid,linestyle=none](4,-4)(1,-1)(0.5,0)(1,1)(1.5,1.5)(2,10)
\pscurve[fillcolor=lightgray,fillstyle=solid,linestyle=none](1,0.5)(0,2)(3;80)(10;60)
\pspolygon[fillcolor=white,fillstyle=solid,linestyle=none](0,0)(3,0)(3,-3)(0,-3)
\end{psclip}

\pscircle[linewidth=0.1pt](0,0){3}

\psline(0,0)(3,0)
\pscurve(1,-1)(0.5,0)(1,1)
\pscurve(1,0.5)(0,2)(3;80)
 
\rput(1.8,1.5){$R_1$}
\rput(0.3,-0.3){$c_1$}
\rput(2,-0.4){$\beta_1$}
\rput(2.3;100){$\beta_2$}

\end{pspicture}
\end{center}

\caption{The outer region defined by an outermost crossing.}
\label{figure:Outerregion}
\end{figure}
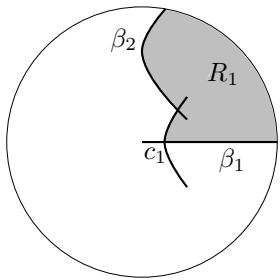

\noindent If $\beta_2$ has no outermost crossing in $R_1$, then we can reduce the number of crossings by an isotopy of $\beta_1$. Otherwise, consider an outermost crossing $c_2$ of $\beta_2$ (with $\beta_3$). Note that $\beta_3\neq\beta_1$ as we are assuming that two blocks do not share more than one crossing. We can similarly consider the outer region $R_2$ of the projection disk bounded by the component of $\beta_2-c_2$ that has no crossings with other blocks and that is the smallest that contains one of the two components of $\beta_3-c_2$. Notice that $R_2$ has fewer crossings than $R_1$. Repeating this process we get a contradiction to the diagram being  minimal or to the number of crossings being finite.
{\hfill \footnotesize $\square$}


\begin{thebibliography}{99}
\bibitem{Adams}
C. Adams, The knot book. An elementary introduction to the mathematical theory of knots.
Amer. Math. Soc., Providence, Rhode Island, reprint 2004.
\bibitem{Bleiler}
S. A. Bleiler, Knots prime on many strings.
Trans. Amer. Math. Soc. 282 (1984), no. 1, 385-401.
\bibitem{Bogdanov}
A. Bogdanov, V. Meshkov, A. Omelchenko and M. Petrov, Enumerating the $k$-tangle projections.
J. of Knot Theory Ramifications 21 (2012), no. 7, 1250069, 17 pp.   
\bibitem{Buck}
D. Buck, DNA topology. Applications of knot theory.  Proc. Sympos. Appl. Math., 66, Amer. Math. Soc., Providence, Rhode Island, 2009, 47-79.
\bibitem{Cochran}
T. Cochran and D. Ruberman, Invariants of tangles. Math. Proc. Camb. Phil. Soc. 105 (1989), no. 2, 299-306.
\bibitem{Conway}
J. H. Conway, An enumeration of knots and links, and some of their algebraic properties. Computational Problems in Abstract Algebra (Proc. Conf., 1967), Pergamon Press, Oxford, 329-358.
\bibitem{Ernst-Sumners}
C. Ernst and D. W. Sumners, A calculus for rational tangles: applications to DNA recombination. Proc. Cambridge Philos. Soc. 108 (1990), no.3, 489-515.
\bibitem{Jacobsen}
J. L. Jacobsen and P. Zinn-Justin, The combinatorics of alternating tangles: from theory to computerized enumeration. Asymptotic Combinatorics with Application to Mathematical Physics, NATO Science Series 77 (2002), 97-112. 
\bibitem{Kanenobu}
T. Kanenobu, H. Saito and S. Satoh, Tangles with up to seven crossings. Interdisciplinary Information Sciences 9 (2003), no.1, 127-140.
\bibitem{Larranaga}
J. C. G\'omez-Larra\~naga, Graphs of tangles. Trans. Amer. Math. Soc. 286 (1984), no.2, 817-830.
\bibitem{Kir-Lick}
R. C. Kirby and W. B. R. Lickorish, Prime knots and concordance. Math. Proc. Cambridge Philos. Soc. 86 (1979), no.3,  437-441.
\bibitem{Lickorish}
W. B. R. Lickorish, Prime knots and tangles. Trans. Amer. Math. Soc. 267 (1981), no.1, 321-332.
\bibitem{Rolfsen}
D. Rolfsen, Knots and links. AMS Chelsea Publishing vol. 346, reprint 2003.
\bibitem{Men-Zhang}
W. Menasco and X. Zhang, Notes on tangles, $2$-handle additions and exceptional Dehn fillings. Pacific J. Math. 198 (2001), no.1, 149-174.
\end{thebibliography}
\end{document}